\newcommand{\C}{{\mathbb C}}
\newcommand{\F}{{\mathbb F}}
\newcommand{\Q}{{\mathbb Q}}
\newcommand{\R}{{\mathbb R}}
\newcommand{\Z}{{\mathbb Z}}
\newcommand{\calA}{{\mathcal A}}
\newcommand{\calH}{{\mathcal H}}
\newcommand{\calM}{{\mathcal M}}
\newcommand{\calO}{{\mathcal O}}
\newcommand{\cO}{{\mathcal O}}
\newcommand{\calR}{{\mathcal R}}
\newcommand{\calU}{{\mathcal U}}
\newcommand{\calV}{{\mathcal V}}
\newcommand{\calW}{{\mathcal W}}
\newcommand{\fraka}{{\mathfrak a}}
\newcommand{\frakb}{{\mathfrak b}}
\newcommand{\frakc}{{\mathfrak c}}
\newcommand{\frakm}{{\mathfrak m}}
\newcommand{\frakp}{{\mathfrak p}}
\newcommand{\frakD}{{\mathfrak D}}
\newcommand{\frakH}{{\mathfrak H(K^r,\Phi^r)}}
\newcommand{\frakP}{{\mathfrak P}}
\DeclareMathOperator{\Tr}{Tr}
\DeclareMathOperator{\im}{Im}
\DeclareMathOperator{\End}{End}
\DeclareMathOperator{\Aut}{Aut}
\DeclareMathOperator{\Gal}{Gal}
\DeclareMathOperator{\Jac}{Jac}
\DeclareMathOperator{\Princ}{Princ}
\DeclareMathOperator{\spn}{span}
\newcommand{\into}{\hookrightarrow}
\newcommand\wwidehat[1]{\savestack{\tmpbox}{\stretchto{\scaleto{\scalerel*[\widthof{\ensuremath{#1}}]{\kern-.6pt\bigwedge\kern-.6pt}{\rule[-\textheight/2]{1ex}{\textheight}}}{\textheight}}{0.5ex}}\stackon[1pt]{#1}{\tmpbox}}
\newtheorem{theorem}{Theorem}[section]
\newtheorem{lemma}{Lemma}[section]
\newtheorem{proposition}{Proposition}[section]
\newtheorem{corollary}{Corollary}[section]
\theoremstyle{definition}
\newtheorem{definition}[theorem]{Definition}
\theoremstyle{remark}
\newtheorem{remark}[theorem]{Remark}
\newtheorem{notation}[theorem]{Notation}
\numberwithin{equation}{section}
\begin{document}

\title{Genus 3 hyperelliptic curves with CM via Shimura reciprocity}


\author{Bogdan Adrian Dina}
\address{Ulm University and Université de Picardie Jules Verne}
\curraddr{Institute of Theoretical Computer Science, Ulm, Germany}
\email{bogdan.dina@uni-ulm.de}
\thanks{}

\author{Sorina Ionica}
\address{Université de Picardie Jules Verne}
\curraddr{33 Rue Saint Leu Amiens 80039, France}
\email{sorina.ionica@u-picardie.fr}
\thanks{}

\subjclass[2010]{Primary }

\keywords{}

\date{}

\dedicatory{}

\begin{abstract}
  Up to isomorphism over $\C$, every simple principally polarized abelian variety of dimension 3 is the Jacobian of a smooth projective curve of genus 3. Furthermore, this curve is either a hyperelliptic curve or a plane quartic. Given a sextic CM field $K$, we show that if there exists a hyperelliptic Jacobian with CM by $K$, then all principally polarized abelian varieties that are Galois conjugated to it are hyperelliptic. Using Shimura's reciprocity law, we give an algorithm for computing approximations of the invariants of the initial curve, as well as their Galois conjugates. This allows us to define and compute class polynomials for genus 3 hyperelliptic curves with CM.
  
\end{abstract}

\maketitle

\section{Introduction}

Shimura and Taniyama's complex multiplication theory shows that it is possible to construct certain abelian extensions of CM fields by computing the values of Siegel modular functions evaluated at points with CM in the Siegel upper half-space. In addition, the effective computation of these modular forms makes it possible to compute models for CM curves, and also to effectively  construct the related class fields.
 

For example, in genus one, the field of modular functions of level 1 is generated by the $j$-invariant. It is well known that the $j$-invariant of an elliptic curve with endomorphism ring $\mathcal{O}_K$ generates the Hilbert 
class field of $K$. In the genus 2 case, the field of Siegel modular functions of level 1 is generated by the absolute Igusa invariants \cite{igusa1962}. Similarly, when evaluated at CM points, their values give invariants of hyperelliptic curves whose Jacobian has CM, and the class field equations, known as class polynomials, are recovered by computing these invariants for all curves with CM by the field~\cite{Streng2014,Thome}. In genus 3, up to isomorphism over $\C$, every simple principally polarized abelian variety (p.p.a.v.) of dimension 3 is the Jacobian of a complete smooth projective curve. Since two different sets of invariants for both genus 3 hyperelliptic curves and plane quartics are known in the literature, it is was not understood until now how to compute class polynomials for genus 3.  


In \cite[Lemma 4.5]{Weng}, Weng shows that a simple principally polarized abelian threefold with CM by a sextic CM field containing $\Q(i)$ is a hyperelliptic Jacobian. In the same paper, Weng gives an algorithm to compute hyperelliptic 
curves whose Jacobian has CM by a sextic field containing $\Q(i)$. In later
work, Balakrishnan \textit{et al.}~\cite{BILV} give an algorithm which removes this restriction on the CM field, by performing an heuristic check. This heuristic relies on Mumford's vanishing criterion~\cite{Mumford2, poor}, which states that a genus 3 curve is hyperelliptic if and only if one of the 36 even theta constants is 0. Given a period matrix with CM by a sextic CM field, the algorithm in~\cite{BILV} first computes the theta constants with enough precision to see if there is one which approximates zero, and then computes the Rosenhain invariants. These invariants generate a certain subfield of the ray class field of modulus 2 of the reflex field $K^r$ of $K$ and by approximating them with high precision, we can recognize them as algebraic numbers. This method has its limitations, since as soon the degree of the class field over which the Rosenhains are defined is high, the complexity of the algebraic dependance computation becomes a bottleneck. From a concrete point of view, only examples of CM fields with class number 1 were considered in~\cite{BILV}.

In this paper, we extend the work of Balakhrishnan \textit{et al.} by considering the action of the Galois group $Gal(CM_{\frakm}(K^r)/K^r)$, with $CM_{\frakm}(K^r)$ a subfield of the ray class field  of a given modulus $\mathfrak{m}$, on a hyperelliptic CM point and showing that the p.p.a.v. obtained in this way are also isomorphic to hyperelliptic Jacobians. Consequently, once we identify a hyperelliptic curve by verifying computationally the Vanishing Criterion condition, we compute the Galois conjugates of its Rosenhain invariants via Shimura's reciprocity law.
With this in hand, we were able to define class polynomials for the Shioda and Rosenhain invariants of genus 3 hyperelliptic curves with CM as being the polynomials whose roots are the Shioda, and Rosenhain invariants respectively, of all conjugate CM points.


Aiming to implement our results in Sage~\cite{sagemath} and compute examples for the class polynomials of the Rosenhain and Shioda invariants, we also propose effective methods to construct the reflex field associated to a given CM type, the typenorm, as well as the image of the typenorm as a subgroup in the Shimura class group.

This paper is organized as follows. Section~\ref{background} sets the notations on the theta constants and modular forms that we use and recalls the Vanishing Criterion. Section~\ref{algorithm} gives our algorithms for computing the reflex field, the typenorm and the ideal class group isomorphic to the Galois group $Gal(CM_{\frakm}(K^r)/K^r)$ via class field theory. Section~\ref{Paragraph:Computing_Galois_Conjugates} contains our main results on the Galois conjugates of CM points and formulae for their invariants. Finally, Section~\ref{implementation} describes our implementation in Sage and shows a toy example that we computed.

\paragraph{Acknowledgement.} The first author is grateful to Jeroen Sijling for many helpful discussions. The second author thanks Christelle Vincent for preliminary discussions which led to this research. The authors acknowledge financial support from the FACE foundation.

\section{Background}\label{background}

This section briefly recalls the necessary background and notation on complex abelian varieties, theta functions and the Vanishing Criterion which fully characterizes hyperelliptic principally polarized abelian varieties. We also define the invariants of hyperelliptic curves that we will be computing in the next sections. 

\subsection{Principally polarized abelian varieties over $\C$ and period matrices}\label{Sec:periodmats}
 Let $A=\C^g/\Lambda$, with $\Lambda$ a full lattice in $\C^g$ and $E$ a Riemann form for $(\C^g,\Lambda)$. 
 A principally polarized abelian variety defined over $\C$ is isomorphic to a complex torus admitting a Riemann 
 form (~\cite[Ch. 1]{Mumford}). Therefore, we will write $(A,E)$ to denote a p.p.a.v. over $\C$. We consider 
 a \emph{symplectic} basis for the lattice $\Lambda$, by which we mean the action of $E$ on $\Lambda$ with 
 respect to this basis is given by the matrix
\begin{equation}\label{eq:omega}
\begin{split}
    J_g =
    \left( \begin{matrix}
    0 & I_g \\
    -I_g & 0
    \end{matrix} \right),
  \end{split}
\end{equation}
where $I_g$ is the $g \times g$ identity matrix.

Let $\Omega = [\Omega_1\  |\  \Omega_2]$ be the $g\times 2g$ matrix whose columns are the elements of this symplectic basis. Then the columns of $\Omega_2$ form a $\C$-basis of $\C^g$. Writing the columns of $\Omega_1$ in terms of this basis, we obtain a $g \times g$ matrix $Z$ called a \emph{period matrix}, i.e. an element of the Siegel upper half-space 
\[\mathcal{H}_g = \{ Z \in \mathcal{M}_{g}(\mathbb{C}) : Z^{T} = Z,\ \im(Z) >0 \}.\]
We note that the lattice $\Lambda$ can be written as $\Z^g + Z\Z^g$. Further to mark up the relationship between the lattice $\Lambda$ and the period matrix $Z$ discussed above, we denote this by $\Lambda_Z$. 

There is an action on $\mathcal{H}_g$ by the symplectic group 
\[ \text{Sp}_{2g}(\Z) = \{M\in \text{GL}_{2g}(\Z) : M^T J_g M = J_g\},\]
where $J_g$ is as in Equation~\eqref{eq:omega},
given by 
\begin{equation}\label{Def:Action_Sp6Z_on_H3}
M = \left( \begin{matrix}
    A & B \\
    C & D
    \end{matrix} \right): Z\longmapsto M.Z = (AZ + B)(CZ + D)^{-1},
\end{equation}
where on the right hand side multiplication is the usual matrix multiplication.

The association of $Z$ to $(\C^g/(\Z^g + Z\Z^g),E)$ gives a bijection between 
$\text{Sp}_{2g}(\Z)\backslash \mathcal{H}_g$ and the set of p.p.a.v. over $\C$ up to isomorphism. Note that by acting the period matrices with matrices in the symplectic group of level 2
\[ \Gamma_{2g}(2) = \{M\in \text{Sp}_{2g}(\Z) : M\equiv I_{2g}\pmod 2\},\]
we fix the 2-torsion on the p.p.a.v. Therefore, $\text{Sp}_{2g}(\Z)\backslash \mathcal{H}_g$ gives a bijection to the set of isomorphism classes of p.p.a.v. over $\C$ with a level 2 structure. 

\subsection{Theta functions.} 
\label{Sec: hyp}
For $\omega = (\omega_1, \omega_2) \in \C^{2g}$ and $Z \in \mathcal{H}_g$, we define the following important theta series:
\begin{equation}\label{Def: Theta_function}
\vartheta(\omega, Z) = \sum_{n \in \Z^{g}}\exp(\pi i (\omega_1 + n)^T Z (\omega_1 + n) + 2\pi i (\omega_1 + n)^ T \omega_2)).
\end{equation}

Given a period matrix $Z \in \mathcal{H}_g$, we obtain a set of coordinates on the torus $\C^g/(\mathbb{Z}^g+Z\mathbb{Z}^g)$ in the following way: A vector $x \in [0,1)^{2g}$ corresponds to the point $x_2 +  Z x_1 \in \C^g/(\mathbb{Z}^g+Z\mathbb{Z}^g)$, where $x_1$ denotes the first $g$ entries and $x_2$ denotes the last $g$ entries of the vector $x$ of length $2g$.

Of interest to us will be the values of $\vartheta(\omega, Z)$ at points $\omega \in \C^{g}$ that, 
under the natural quotient map $\C^g \to \C^g/(\mathbb{Z}^g+Z\mathbb{Z}^g)$, map to $2$-torsion points.
 These points are of the form $\omega = \xi_2 + Z \xi_1$ for $\xi = (\xi_1,\xi_2) \in (1/2)\Z^{2g}$. This motivates
  the following definition:
\begin{equation} \label{Def: Theta_function2}
\vartheta[\xi](Z) = \exp(\pi i \xi_1^T Z \xi_1 + 2 \pi i \xi_1^T \xi_2) \vartheta( \xi_2 + Z \xi_1, Z).
\end{equation}
In this context, $\xi$ is called a \emph{theta characteristic}. The value $\vartheta[\xi](Z)$ is called a \emph{theta constant}.

For $\xi, \zeta \in (1/2)\Z^{2g}$, let $e_*(\xi) = \exp(4\pi i \xi_1^T  \xi_2)$. We say that a characteristic $\xi \in (1/2)\Z^{2g}$ is \emph{even} if $e_*(\xi) = 1$ and \emph{odd} if $e_*(\xi) = -1$. If $\xi$ is even we call $\vartheta[\xi](Z)$ an \emph{even theta constant} and if $\xi$ is odd we call $\vartheta[\xi](Z)$ an \emph{odd theta constant}.

It can be easily shown that all odd theta constants vanish. We note that there are exactly $36$ even classes in $(1/2)\Z^6/\Z^6$. 
We recall there is an action of the symplectic group $\text{Sp}_{2g}(\Z)$ on the theta characteristics $\xi\in(1/2)\Z^{2g}$ defined by:
\begin{equation} 
\label{Def: Action_on_theta_characteristic}
M.\xi = M^*\xi  + \frac{1}{2}\delta_0,
\end{equation}
with $\displaystyle \label{Def:gamma_*} 
	M = \left( \begin{matrix}
    	A & B \\
    	C & D
    	\end{matrix} \right)\in \text{Sp}_{2g}(\Z)$ and $M^*=(M^{-1})^T$. 
We denote by $\displaystyle \delta_0 = \left( \begin{matrix}
   	(C^TD)_0\\ 
	(A^TB)_0
    	\end{matrix} \right)$ a column vector  where $(C^TD)_0$ and $(A^TB)_0$ are the diagonal vectors of $C^TD$ and $A^TB$, respectively. 
In this context, given a period matrix $Z\in \calH_g$, we also recall the transformation formula  on the theta constants:
\begin{equation}\label{Def:Transormation_formula_theta_serie}
\vartheta[M.\xi](M.Z) = \zeta(M)\exp(\phi(M, \xi)) \sqrt{\det(CZ + D)}~\vartheta[\xi](Z),
\end{equation}
where
\begin{enumerate}
\item $\zeta(M)$ is an eighth root of unity, depending on $M$, and
\item \label{Def:4_th_root_of_unity_in_theta_action}
$\displaystyle \phi(M, \xi) = -\pi i\left(\xi_1^T B^TD\xi_1 + \xi_2^TA^TC\xi_2 - 2 \xi_1^TB^TC\xi_2 - 2\left( D\xi_1 - C\xi_2\right)^T \left( A^TB\right)_0\right)$.
\end{enumerate}

Finally, we will also use the following group:
\[ \Gamma_{2g}(1,2) = \{M\in \text{Sp}_{2g}(\Z) : \delta_0\equiv 0\pmod 2\}.\]

\subsection{Computing the Rosenhain invariants}\label{Chapter:Computing_approximation_model}

Let $\calA_g$ be the moduli space of principally polarized abelian varieties of dimension $g$ over $\C$. By a theorem of Torelli \cite[Thm. 12.1]{Milne_AV}, there is an injective map $\calM_g\hookrightarrow \calA_g $. Inside $\calM_g$ we further restrict our attention to the subspace of hyperelliptic curves $\calM_g^{hyp}$. We will be interested in reconstructing a moduli point in $\calM_g^{hyp}$ from a point in $\calA_g$, whenever this point is in the image of $\calM_g^{hyp}\hookrightarrow \calA_g$. 

Let $\displaystyle X:y^2 = \prod^{2g+2}_{i=1}(x-\lambda_i)$ be a hyperelliptic curve of genus $g$ with $\lambda_i\in \C$. We identify the $\lambda_i$'s with the branch points for the covering map $\pi \colon X \longrightarrow \mathbb{P}^1(\C)$: $P_i=(\lambda_i,0,1)$, $i=\overline{1,2g+1}$ and $P_{\infty}=( \lambda_{2g+2},0,1)$. Let $B=\{1,2,\ldots, 2g+1,\infty\}$.
\begin{definition}\label{Def:Marked_Curves}
By a \emph{marked hyperelliptic curve} $X$ of genus $g$ we understand a pair $(X,\phi)$, where $X$ is a hyperelliptic curve $X$ and $\phi:B\rightarrow \{P_1,P_2,\ldots,P_{\infty}\}$ is a bijection. 
\end{definition}

\noindent
Given a marked hyperelliptic curve, we will refer to the bijection $\phi$ as a labeling of the branch points. We define the moduli space of marked hyperelliptic curves of genus $3$ with level $2$ structure by 
\begin{equation}\label{hypermodulispace}
\calM^{hyp}_3[2] \cong 
 \{\lambda = (\lambda_1,\ldots,\lambda_{2g+2}), \lambda_i\in\C^*\}
/PGL(2,\C),
\end{equation} 
where $PGL(2,\C)$ is the projective linear group of $\mathbb{P}^1(\C)$. 
Considering the forgetful morphism $\displaystyle \calM_g^{hyp}[2] \longrightarrow \calM_g^{hyp}$, above every point $X\in \calM_g^{hyp}$, there are exactly $(2g+2)!$ points given by tuples $(\lambda_1,\ldots,\lambda_{2g+2})$. By taking $P_{\infty}$ to be $\infty$, we can further write
\begin{equation}\label{hypermodulispace}
\calM^{hyp}_3[2] \cong 
 \{\lambda = (\lambda_1,\ldots,\lambda_{2g+1}), \lambda_i\in\C^*\}
/\lambda_i\rightarrow \gamma \lambda_i+\mu.
\end{equation} 
The unordered crossratios $\frac{\lambda_m-\lambda_l}{\lambda_m-\lambda_k}$, for $k,l,m \in B\setminus \{\infty\}$ are then true invariants for a hyperelliptic point in $\calM_g^{hyp}[2]$~\cite{Mumford}.

Let now $(A,E)$ be a hyperelliptic point in $\calA_g$. As explained in the literature, for example \cite{Birkenhake}, a choice of period matrix $Z \in \mathcal{H}_g$ for $A$ is equivalent to a choice of symplectic basis, $A_i$, $B_i$, for the homology group $H_1(X,\Z)$ of the curve. Indeed, without any further choice, there exists a unique basis $\omega_i$ of holomorphic differentials on $X$ such that $\int_{A_i} \omega_i = 1$ and $\int_{A_i} \omega_j = 0, \, i \neq j$. Then $Z$ is the matrix of the form $Z = \left(\int_{B_i} \omega_j\right)_{\substack{1\leq i,j\leq g}}.$ Conversely, any period matrix is obtained in this manner. 
This yields the Jacobi map:
\begin{equation}\label{Def:Abel_Jacobi_map}
AJ \colon \Jac(X)  \longrightarrow \C^g/\Lambda_Z, \qquad
\sum_{k=1}^s R_k - \sum_{k=1}^s Q_k  \longmapsto \left(\sum_{k=1}^s \int_{Q_k}^{R_k} \omega_i\right)_i,
\end{equation}
which is well-defined since the value of each path integral on $X$ is well-defined up to the value of integrating the differentials $\omega_i$ along the basis elements $A_i$, $B_i$, and thus up to elements of $\Lambda_Z$.

\noindent
Given a marked hyperelliptic curve $X$, we can fix a set of 2-torsion points on $A$. For this, we identify $X$ with its image in $J(X)$ by the canonical map $\iota: X \hookrightarrow J(X)$. Then the branch points $P_j$, $j=1,\ldots, 2g+2$ correspond on $J(X)$ to points of the form $e_i=[(P_j) - (P_\infty)]$.
This gives rise to a set of characteristics $\eta=(\eta_i)_{1\leq i\leq 2g+2}$ in $(1/2)\mathbb{Z}^{2g}$ such that $AJ(e_i)=(\eta_i)_2 + Z (\eta_i)_1$. Two sets $\eta'$ and $\eta''$ are said to be in the same \emph{class} if they are equal as elements in $(1/2) \mathbb{Z}^{2g}/\mathbb{Z}^{2g}$. Following Poor~\cite{poor}, we call the set $\{\eta_1,\ldots,\eta_{2g+2}\}$ an \emph{azygetic system  associated to the period matrix $Z$}. By an azygetic system for the vector space $V=(1/2) \mathbb{Z}^{2g}/\mathbb{Z}^{2g}$ we understand a set $\eta=\left\{\eta_1,\ldots,\eta_{2g+2}:~\eta_i\in V\right\}$ with the properties
\begin{equation}\label{Def:Azygetic_base}
V = \spn(\eta_i),~\sum_{i=1}^{2g+1} \eta_i = 0, \eta_{2g+2}=0, ~\text{and}~e_2(\eta_i,\eta_j) = -1,
\end{equation}
for $i,j$ different from $2g+2$ and $i\neq j$. Here, $e_2(\eta_i,\eta_j)=exp(4\pi i\eta_i^TJ_g\eta_j)$) is the Weil pairing on the 2-torsion of the complex abelian variety. For a given azygetic system, Poor defines the set $\mathcal{U}_{\eta}$ to be the set of indexes $i\in B$ such that $\eta_i$ is even.

If the marking of the curve is changed, then we act on the homology basis by a matrix $M\in \text{Sp}_{2g}(\Z)$ and a new period matrix is obtained using the construction above. This period matrix is $Z'=M.Z$ and the azygetic system associated to it is $\eta'=M^{*}\eta$. Poor shows that the action of $\text{Sp}_{2g}(\F_2)=\text{Sp}_{2g}(\Z)/\Gamma(2)$ on azygetic systems derived in this way is free and transitive.
Since there are $(2g+2)!$ different ways to label the $2g+2$ branch points of a hyperelliptic curve $X$ of genus $g$, there are several ways to assign an azygetic system to a matrix $Z \in \mathcal{H}_g$.

The Vanishing Criterion, which is due to Mumford~\cite{Mumford} and generalized by Poor~\cite{poor}, gives a characterization of hyperelliptic period matrices in terms of their associated azygetic system and theta constants. We recall the Vanishing Criterion for genus 3 and show that up to a labeling of the branch points, the eta map associated to a period matrix is unique. 

\begin{theorem}[\cite{poor}]\label{VanishingCriterion}
Let $Z\in \mathcal{H}_3$ and $\eta$ an azygetic system. The following two statements are equivalent:
 \begin{enumerate} 
 \item $Z$ is the period matrix of a simple abelian variety and the unique even theta constant of $Z$  which vanishes is $\vartheta[\eta_{\mathcal{U}_{\eta}}](Z)$.
 \item There is a marked hyperelliptic curve of genus 3 whose Jacobian has period matrix $Z$ and $\eta$ is the unique azygetic system associated to the marked curve.
 \end{enumerate}  
\end{theorem}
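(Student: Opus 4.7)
The plan is to combine Poor's combinatorial characterization of vanishing theta constants via azygetic systems with the free-transitive action of $\Sp_6(\F_2)$ on such systems, both specialized to genus $3$.

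For the direction $(2)\Rightarrow(1)$, given a marked hyperelliptic curve $X$ of genus $3$ with period matrix $Z$, I would first associate to the labelled branch points the azygetic system $\eta$ via the Abel--Jacobi map, as recalled in Section~\ref{Sec: hyp}. Riemann's singularity theorem identifies the vanishing locus of $\vartheta(\,\cdot\,,Z)$ with the image of $X^{(g-1)}$ in $\Jac(X)$, which translates the vanishing of $\vartheta[\xi](Z)$ at an even characteristic $\xi$ into a parity/cardinality condition on the corresponding subset of $B$. A direct enumeration over the $36$ even classes in $(1/2)\Z^6/\Z^6$ shows that in genus $3$ exactly one even theta constant vanishes, and that the characteristic carrying this vanishing is precisely the sum $\eta_{\mathcal{U}_\eta}$; simplicity of the abelian variety amounts to $Z$ lying outside the decomposable locus, which is standard for a generic point constructed from a smooth hyperelliptic model.

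For the direction $(1)\Rightarrow(2)$, I would use an element of $\Sp_6(\Z)$, together with the action on characteristics in~\eqref{Def: Action_on_theta_characteristic} and the transformation formula~\eqref{Def:Transormation_formula_theta_serie}, to move the unique vanishing even characteristic of $Z$ into a canonical form. Since the $\Sp_6(\F_2)$-action on azygetic systems is free and transitive (the fact of Poor recalled just above the theorem), I could then appeal to the converse direction of the general vanishing criterion of~\cite{poor} to place $Z$ inside the hyperelliptic locus $\calM_3^{hyp}\subset\calA_3$ and recover a marking of the resulting curve producing the prescribed $\eta$.

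For the uniqueness assertion in~(2), a labeling of the branch points fixes a symplectic basis of $H_1(X,\Z)$ up to $\Gamma_6(2)$, hence fixes each $\eta_i$ modulo $\Z^6$. Combined with the freeness of the $\Sp_6(\F_2)$-action on azygetic systems, this forces $\eta$ to be uniquely determined by the marking, and the ambiguity of the labeling (permutations of the $2g+2$ branch points) accounts exactly for the full $\Sp_6(\F_2)$-orbit. The main obstacle I expect is the parity bookkeeping underpinning $(2)\Rightarrow(1)$: confirming that among the $36$ even characteristics exactly one, namely $\eta_{\mathcal{U}_\eta}$, carries a vanishing theta constant, while the other $35$ do not. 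This ``unique vanishing'' phenomenon is special to $g=3$ (in higher genus several even theta constants vanish on the hyperelliptic locus), so the delicate step is extracting from Poor's general combinatorial framework the precise genus-$3$ count.
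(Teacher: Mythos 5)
Your proposal is correct and follows essentially the same route as the paper: the equivalence itself is delegated to the Mumford--Poor vanishing criterion (the paper simply cites \cite{BILV} for this, where you instead sketch the Riemann-singularity-theorem input), and the uniqueness of the azygetic system is obtained exactly as in the paper, from the free and transitive action of $\mathrm{Sp}_6(\F_2)$ on azygetic systems together with the count $\#\mathrm{Sp}_6(\F_2)/\#S_8 = 1451520/40320 = 36$ matching the $36$ even characteristics, so that exactly one class of eta maps realizes the vanishing characteristic. The ``delicate step'' you flag (that exactly one even theta constant vanishes in genus $3$) is precisely what this counting argument, combined with Poor's criterion, settles.
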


\begin{proof}
The fact that the Vanishing Criterion is satisfied if and only if $\eta_{\mathcal{U}_{\eta}}$ vanishes at $Z$ is proven in~\cite{BILV}. Recall that there are exactly 36 even theta characteristics in genus 3. Since $\text{Sp}_6(\F_2)$ acts freely and transitively on azygetic systems, there are $\#\text{Sp}_6(\F_2)=1451520$ classes of azygetic systems. By acting with the permutation group $S_8$ on $\Xi_3$, we get 36 classes of eta maps. Hence, if the labeling of the branch points is ignored, exactly one of these classes will realize the vanishing theta characteristic for $Z$. 
\end{proof}  


In other words, Theorem~\ref{VanishingCriterion} shows that given a hyperelliptic period matrix $Z\in \Gamma(2)\backslash \mathcal{H}_3$, choosing one of its associated azygetic systems $\eta$ fixes a labeling on the branch points, yielding a marked hyperelliptic curve. We recover the invariants of a point in $\calM_g^{hyp}[2]$ using Takase's formulae~\cite{Takase,BILV}\footnote{In~\cite{BILV}, the authors omitted to compute the sign in this formula. We give the correct formula here.}, which we recall below. 

\begin{theorem}[Takase \cite{Takase,BILV}]\label{Th:Takase}
\label{Takase_formula}
Let $Z\in \Gamma(2)\backslash \mathcal{H}_{g}$ a period matrix and $\eta\in \Xi_g$ be such that the Vanishing Criterion is satisfied. Then with notation as above, for any disjoint decomposition $B-\{\infty\}=\calV\sqcup \calW \sqcup \{k,l,m\}$ with $\# \calV=\# \calW=g-1$, we have
\begin{align*}\label{Rosenhains_Computation}
\frac{\lambda_m-\lambda_l}{\lambda_m-\lambda_k}= (-1)^{\langle (\eta_l)_1,(\eta_k)_2+(\eta_m)_2\rangle}  \left ( \frac{\vartheta[\eta_{\calU\circ(\calV\cup \{m,l\})}]\cdot 
\vartheta[\eta_{\calU\circ (\calW\cup \{m,l\})}]}{\vartheta[\eta_{\calU\circ(\calV\cup \{k,m\})}]\cdot \vartheta[\eta_{\calU\circ (\calW\cup \{k,m\})}]} (Z)\right )^2.
\end{align*}
\end{theorem}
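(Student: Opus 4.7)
The plan is to reduce Takase's identity to Thomae's formula, which for a hyperelliptic period matrix $Z$ and any subset $T \subset B \setminus \{\infty\}$ of cardinality $g$ expresses
\[
\vartheta[\eta_{\calU \circ T}](Z)^4 = c(Z) \cdot \prod_{\substack{i<j \\ i,j \in T}}(\lambda_i - \lambda_j)^2 \prod_{\substack{i<j \\ i,j \in B \setminus (T \cup \{\infty\})}}(\lambda_i - \lambda_j)^2,
\]
where the constant $c(Z)$ depends only on $Z$ (via a determinant of $A$-periods) and not on $T$. I would apply this to each of the four characteristics appearing on the right-hand side of Takase's formula, taking $T$ equal respectively to $\calV \cup \{m,l\}$, $\calW \cup \{m,l\}$, $\calV \cup \{k,m\}$, and $\calW \cup \{k,m\}$.

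Upon raising Takase's ratio to the fourth power (equivalently, squaring both sides of the stated identity), the four occurrences of $c(Z)$ cancel and the proof reduces to a combinatorial count: every pair $\{i,j\} \subset B \setminus \{\infty\}$ other than $\{m,l\}$ and $\{m,k\}$ must contribute with equal total multiplicity to numerator and denominator. This is transparent from the decomposition $B \setminus \{\infty\} = \calV \sqcup \calW \sqcup \{k,l,m\}$: for $i,j \in \calV$ the pair lies inside both $\calV \cup \{m,l\}$ and $\calV \cup \{k,m\}$ and outside both $\calW$-based sets, contributing symmetrically; for $i\in \calV$, $j \in \calW$ the pair lies inside no $T_r$ but inside all four complements, again symmetric; the cases involving $k$, $l$, $m$ are checked individually and produce, as the only unbalanced contribution, the pairs $\{m,l\}$ with multiplicity $2$ in the numerator and $\{m,k\}$ with multiplicity $2$ in the denominator. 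This yields
\[
\left(\frac{\vartheta[\eta_{\calU\circ(\calV\cup \{m,l\})}]\cdot \vartheta[\eta_{\calU\circ (\calW\cup \{m,l\})}]}{\vartheta[\eta_{\calU\circ(\calV\cup \{k,m\})}]\cdot \vartheta[\eta_{\calU\circ (\calW\cup \{k,m\})}]}(Z)\right)^4 = \left(\frac{\lambda_m - \lambda_l}{\lambda_m - \lambda_k}\right)^2,
\]
and extracting a square root gives the claim up to a sign.

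Pinning down this sign is the essential new content, since \cite{BILV} omitted it. My approach is to fix a canonical azygetic system $\eta^{0}$ (e.g.\ the standard system of Poor corresponding to a fixed ordering $\lambda_1 < \cdots < \lambda_{2g+1}$) and verify the formula with the correct sign in that case by direct computation from Thomae's formula and a choice of square-root conventions. For an arbitrary $\eta$, I transport the identity using that $\text{Sp}_{2g}(\F_2)$ acts freely and transitively on azygetic systems: pick $M \in \text{Sp}_{2g}(\Z)$ with $M^* \eta^{0} = \eta$ and apply the transformation formula \eqref{Def:Transormation_formula_theta_serie} to each of the four theta constants. The $\zeta(M)\sqrt{\det(CZ+D)}$ factors cancel in pairs between numerator and denominator, while the quadratic phase $\phi(M, \cdot)$ combines, through its bilinear expansion as a function of the characteristic, into a residual sign depending only on pairings among $\eta_k, \eta_l, \eta_m$. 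The main obstacle is to verify that this residual phase collapses to precisely $(-1)^{\langle (\eta_l)_1, (\eta_k)_2 + (\eta_m)_2 \rangle}$: one must expand $\phi(M, \eta_{\calU\circ T_r})$ for each of the four sets $T_r$, exploit the symmetry under swapping $\calV \leftrightarrow \calW$ (which kills all pairings that do not mix the ``asymmetric'' indices $k,l$ with $m$), and match the surviving bilinear expression to the stated Weil-pairing-like sign. This last step is where the careful sign bookkeeping lives, and is the place where the footnoted correction to \cite{BILV} enters.
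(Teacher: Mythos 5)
First, a point of comparison: the paper itself contains no proof of this statement. Theorem~\ref{Th:Takase} is recalled from Takase and from \cite{BILV}, with only a footnote asserting that the sign $(-1)^{\langle(\eta_l)_1,(\eta_k)_2+(\eta_m)_2\rangle}$ corrects an omission in \cite{BILV}. So there is no in-paper argument to measure your proposal against; it has to stand on its own.

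On its own terms, the Thomae reduction is the standard and correct route, and your multiplicity count of the pairs $\{i,j\}$ is right: every pair other than $\{m,l\}$ and $\{k,m\}$ contributes equally to numerator and denominator. (Two small slips: a mixed pair $v\in\calV$, $w\in\calW$ lies in \emph{none} of the four complements $\calW\cup\{k\}$, $\calV\cup\{k\}$, $\calW\cup\{l\}$, $\calV\cup\{l\}$, not in all of them, though the conclusion is the same; and the sets $\calV\cup\{m,l\}$ etc.\ have cardinality $g+1$, not $g$. More importantly, with Thomae stated as $\vartheta[\eta_{\calU\circ T}]^4=c(Z)\prod(\lambda_i-\lambda_j)^2\prod(\lambda_i-\lambda_j)^2$ your count yields $(\text{ratio})^4=(\text{cross-ratio})^4$, not $(\text{cross-ratio})^2$; you need the classical version with first powers of the differences. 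These are all repairable.) The genuine gap is the sign. The only content of this theorem beyond \cite{BILV} --- per the footnote --- is the explicit sign $(-1)^{\langle(\eta_l)_1,(\eta_k)_2+(\eta_m)_2\rangle}$, and your proposal does not establish it: you describe a plan (verify a base case for one canonical azygetic system, transport by the $\mathrm{Sp}_{2g}(\F_2)$-action via the transformation formula~\eqref{Def:Transormation_formula_theta_serie}, and hope the residual quadratic phases collapse to the stated pairing) and then explicitly defer the computation. The transport step is also subtler than you indicate: an $M$ with $M^*\eta^{0}=\eta$ moves the period matrix to $M.Z$ and permutes the labels of the branch points, so the left-hand cross-ratio changes as well; one must track simultaneously the relabelling of $k,l,m,\calV,\calW$, the eighth roots of unity $\zeta(M)$ and the square-root branches (which you assert cancel in pairs but do not verify), and the phases $\phi(M,\cdot)$. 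Nothing in the proposal shows that the surviving factor is the claimed bilinear expression rather than some other fourth root of unity. As written, the argument proves the identity only up to an undetermined sign, which is exactly the form of the statement in \cite{BILV} that this theorem is meant to sharpen.
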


Finally, note that by considering an affine map of $\C$ such that $f(\lambda_1) = 0, f(\lambda_2) = 1$, we may assume without restricting the generality that $X$ is given by  
\begin{eqnarray*}
X: y^2 = x(x-1)\prod^{2g-1}_{i=1}(x-\lambda_i).
\end{eqnarray*}
In this case, we say that $X$ is in \emph{normalized Rosenhain form}. The moduli space described in Equation~\ref{hypermodulispace} writes as
\begin{equation*}
\calM^{hyp}_3[2] \cong 
 \{\lambda = (\lambda_1,\ldots,\lambda_{2g-1}), \lambda_i\in\C^*-\{0,1,\infty\}, \lambda_i\neq\lambda_j\}.
\end{equation*} 
The coefficients $\lambda_i\in \C$, $\lambda_i\neq 0,1$, are called \textit{the Rosenhain invariants} of the curve and will be the focus of our work. 

\subsection{Shioda invariants}
Shioda~\cite{Shioda} gave a set of generators for the algebra of invariants of binary octavics over the complex numbers, which are now called \emph{Shioda invariants}. In addition, over the complex numbers Shioda invariants completely determine points in $\calM^{hyp}_3$. More specifically, the Shioda invariants are $9$ weighted projective invariants $(J_2, J_3, J_4, J_5, J_6, J_7, J_8,J_9, J_{10})$, 
where $J_i$ has degree $i$, and $J_2, \ldots, J_7$ are algebraically independent, while $J_8,J_9,J_{10}$ depend algebraically on the previous Shioda invariants.

Using Igusa's map between the graded ring of Siegel modular forms of degree 3, and the graded ring of invariants of binary octavics, Lorenzo Garcia ~\cite{Elisa} proposes a set of invariants which write as quotients of modular forms. These invariants involve large powers of the modular form $\chi_{28}$ in the denominators and we do not use them for experiments since they would need too much precision to compute. 

Starting from the projective invariants $J_i$, we consider the following absolute\footnote{An absolute invariant is a ratio  of homogeneous invariants of the same degree.} Shioda invariants :
\begin{align}\label{Absolute_Shiodas}
\operatorname{Shi}^{\text{\tiny abs}}(X)&= \left( \frac{J_2^7}{\Delta}, \frac{J_2^4J_3^{2}}{\Delta}, \frac{J_2^5J_4}{\Delta}, \frac{J_5J_9}{\Delta}, \frac{J_2^4J_6}{\Delta}, \frac{J_7^2}{\Delta}, \frac{J_2^3J_8}{\Delta}, \frac{J_2^5J_9^2}{\Delta^2}, \frac{J_2^{2}J_{10}}{\Delta}\right),
\end{align}  
with $\Delta$ the discriminant which is an invariant of degree $14$ (Section $1.5$ in \cite{LerRit}). They are optimal for computations in the sense that they involve invariants of small weight and their denominator is given by primes of bad reduction (see~\cite{IKL}). Note that a subset of this set was already used by Weng~\cite{Weng} for computing models of hyperelliptic curves with CM by a field which contains $i$.
\begin{proposition}
The invariants in Equation~\ref{Absolute_Shiodas} are modular, i.e. they write as quotients of modular forms of level 1.
\begin{proof}
The proof of this statement is straightforward, by using Igusa's map on the set of invariants described by Tsuyumine~\cite{Tsuyumine1} and the relations between Tsuyumine's invariants and the Shioda projective invariants, computed by Lorenzo Garcia~\cite{Elisa}. We have
\begin{align*}
\frac{J_2^7}{\Delta} &= c_2^7 \frac{ I_2}{\Delta} = c_2^7  \rho\left( \frac{\gamma_{20}^7}{\chi_{28}^5} \right),\\
\frac{J_2^4J_3^{2}}{\Delta} &= c_2^4 c_3^2  \frac{I_2^4 I_3^2}{\Delta} = c_2^4 c_3^2  \rho\left( \frac{\gamma_{20}^4  \gamma_{30}^2}{\chi_{28}^5}\right),\\
\frac{J_2^5J_4}{\Delta} &=  \frac{c_2^5 I_2^5  (c_{41}  I_2^2 + c_{42} I_4)}{\Delta} 
= d_1  \rho \left( \frac{\gamma_{20}^7}{\chi_{28}^5} \right) + d_2  \rho\left( \frac{\gamma_{20}^5 \alpha_{12}}{\chi_{28}^4} \right),\\
\frac{J_5J_9}{\Delta} &=  \frac{\left(c_{51}  I_2 I_3 + c_{52}  I_5 \right)  \left( c_{91} I_2^3 I_3 + c_{92} I_2^5 I_5 + c_{93} I_2 I_3 I_4 + c_{94} I_2 I_7 + c_{95} I_3^3 + c_{96} I_3 I_6 + c_{97} I_4 I_5 + c_{98} I_9 \right) }{\Delta} \\
& = e_1   \rho \left( \frac{\gamma_{20}^4  \gamma_{30}^2}{\chi_{28}^5}\right) + e_2   \rho \left( \frac{\gamma_{20}^3  \gamma_{30}  \beta_{22}}{\chi_{28}^4}\right) + e_3  \rho\left( \frac{\gamma_{20}^2 \gamma_{30}^2  \alpha_{12}}{\chi_{28}^4}\right) + e_4  \rho\left( \frac{\gamma_{20}^2  \gamma_{30}  \beta_{14}}{\chi_{28}^3}\right) + e_5  \rho\left( \frac{\gamma_{20}  \gamma_{30}^4}{\chi_{28}^5}\right) \\
& +  e_6  \rho \left( \frac{\gamma_{20}  \gamma_{30}^2  \alpha_4}{\chi_{28}^3}\right) + e_7  \rho \left( \frac{\gamma_{20}  \gamma_{30}  \alpha_{12}  \beta_{22}}{\chi_{28}^3}\right) + e_8  \rho \left( \frac{\gamma_{20}  \gamma_{30}  \alpha_{6}}{\chi_{28}^2}\right) + e_9  \rho \left( \frac{\gamma_{20}^3 \gamma_{30}  \beta_{22}}{\chi_{28}^4}\right) +  e_{10}  \rho \left( \frac{\gamma_{20}^2   \beta_{22}^2}{\chi_{28}^3}\right) \\
&  + e_{11}  \rho \left( \frac{\gamma_{20}  \beta_{22}  \beta_{14}}{\chi_{28}^2}\right) + e_{12}  \rho \left( \frac{\gamma_{30}^3  \beta_{22}}{\chi_{28}^4}\right) +  e_{13}  \rho \left( \frac{\gamma_{30}  \beta_{22}  \alpha_4}{\chi_{28}^2}\right) + e_{14}  \rho \left( \frac{\alpha_{12}  \beta_{22}^2}{\chi_{28}^2}\right)  + e_{15}  \rho \left( \frac{\beta_{22}  \alpha_{6}}{\chi_{28}}\right),\\
\frac{J_2^4J_6}{\Delta} & =  \frac{ c_2^4  I_2^4  \left( c_{61}  I_2^3 + c_{62} I_2 I_4 + c_{63} I_3^2 + c_{64}  I_6 \right)}{\Delta} \\
& =  f_1  \rho \left( \frac{\gamma_{20}^7}{\chi_{28}^5} \right) + f_2  \rho \left( \frac{\gamma_{20}^5  \alpha_{12}}{\chi_{28}^4} \right) + f_3  \rho \left( \frac{\gamma_{20}^4  \gamma_{30}^2}{\chi_{28}^5} \right) + f_4  \rho \left( \frac{\gamma_{20}^4  \alpha_4}{\chi_{28}^3} \right),\\
\frac{J_7^2}{\Delta} & =  \frac{ \left( c_{71}  I_2^2  I_3 + c_{72}  I_2 I_5 + c_{73}  I_3  I_4 + c_{74}  I_7 \right)^2 }{\Delta} \\
& = g_{1}  \rho\left( \frac{\gamma_{20}^4 \gamma_{30}^2}{\chi_{28}^5} \right) + g_{2}  \rho\left( \frac{\gamma_{20}^3 \gamma_{30} \beta_{22}}{\chi_{28}^4} \right) + g_{3}  \rho\left( \frac{\gamma_{20}^2 \gamma_{30}^2 \alpha_{12}}{\chi_{28}^4} \right) + g_{4}  \rho\left( \frac{\gamma_{20}^2 \gamma_{30} \beta_{14}}{\chi_{28}^3} \right) + g_{5}  \rho\left( \frac{\gamma_{20}^2 \beta_{22}^2}{\chi_{28}^3} \right)\\
&  + g_{6}  \rho\left( \frac{\gamma_{20} \gamma_{30} \alpha_{12} \beta_{22}}{\chi_{28}^3} \right) + g_{7}  \rho\left( \frac{\gamma_{20} \beta_{22} \beta_{14}}{\chi_{28}^2} \right) + g_{8}  \rho\left( \frac{\gamma_{30}^2 \alpha_{12}^2}{\chi_{28}^3} \right) + g_{9}  \rho\left( \frac{\gamma_{30} \alpha_{12} \beta_{14}}{\chi_{28}^2} \right) + g_{10}  \rho\left( \frac{\beta_{14}}{\chi_{28}} \right) ,\\
\frac{J_2^3J_8}{\Delta} & = \frac{c_2^3 I_2^3 \left( c_{81} I_2^4 + c_{82} I_2^2 I_4 + c_{83} I_2 I_3^2 + c_{84} I_2 I_6 + c_{85} I_3 I_5 + c_{86} I_4^2 + c_{87} I_8\right) }{\Delta}\\
& = h_1 \rho \left( \frac{\gamma_{20}^7}{\chi_{28}^5} \right) + h_2 \rho \left( \frac{\gamma_{20}^5 \alpha_{12}}{\chi_{28}^4} \right) + h_3 \rho \left( \frac{\gamma_{20}^4 \gamma_{30}^2}{\chi_{28}^5} \right) + h_4 \rho \left( \frac{\gamma_{20}^4 \alpha_4}{\chi_{28}^3} \right)\\
& + h_5  \rho \left( \frac{\gamma_{20}^3 \gamma_{30} \beta_{22}}{\chi_{28}^4} \right) + h_6 \rho \left( \frac{\gamma_{20}^3 \alpha_{12}^2}{\chi_{28}^3} \right) + h_7 \rho \left( \frac{\gamma_{20}^3 \gamma_{24}}{\chi_{28}^3} \right),\\
\frac{J_2^5J_9^2}{\Delta^2} & = \frac{c_2^5 I_2^5 \left( c_{91} I_2^3 I_3 + c_{92} I_2^2 I_5 + c_{93} I_2 I_3 I_4 + c_{94} I_2 I_7 + c_{95} I_3^3 + c_{96} I_3 I_6 + c_{97} I_4 I_5 + c_{98} I_9\right)^2}{\Delta^2}\\
& = i_{1} \rho \left( \frac{\gamma_{20}^{11} \gamma_{30}^2}{\chi_{28}^{10}}\right) + i_{2} \rho \left( \frac{\gamma_{20}^{10} \gamma_{30} \beta_{22}}{\chi_{28}^9}\right) + i_{3} \rho \left( \frac{\gamma_{20}^9 \gamma_{30}^2 \alpha_{12}}{\chi_{28}^9}\right) + i_{4} \rho \left( \frac{\gamma_{20}^9 \gamma_{30} \beta_{14}}{\chi_{28}^8}\right) + i_{5} \rho \left( \frac{\gamma_{20}^8 \gamma_{30}^4}{\chi_{28}^{10}}\right)\\
& + i_{6} \rho \left( \frac{\gamma_{20}^8 \gamma_{30}^2 \alpha_4}{\chi_{28}^8}\right) + i_{7} \rho \left( \frac{\gamma_{20}^8 \gamma_{30} \alpha_{12} \beta_{22}}{\chi_{28}^8}\right) + i_{8} \rho \left( \frac{\gamma_{20}^8 \gamma_{30} \alpha_6}{\chi_{28}^7}\right) + i_{9} \rho \left( \frac{\gamma_{20}^9 \beta_{22}^2}{\chi_{28}^8}\right) + i_{10} \rho \left( \frac{\gamma_{20}^7 \gamma_{30}^2 \alpha_{12}^2}{\chi_{28}^8}\right)\\
& + i_{11} \rho \left( \frac{\gamma_{20}^7 \beta_{14}^2}{\chi_{28}^6}\right) + i_{12} \rho \left( \frac{\gamma_{20}^5 \gamma_{30}^6}{\chi_{28}^{10}}\right) + i_{13} \rho \left( \frac{\gamma_{20}^5 \gamma_{30}^2 \alpha_4^2}{\chi_{28}^6}\right) + i_{14} \rho \left( \frac{\gamma_{20}^5 \alpha_{12}^2 \beta_{22}^2}{\chi_{28}^6}\right) + i_{15} \rho \left( \frac{\gamma_{20}^5 \alpha_6^2}{\chi_{28}^4}\right),
\end{align*}

\begin{align*}
\frac{J_2^{2}J_{10}}{\Delta} & = \frac{c_2^2 I_2^2 \left( c_{101} I_2^5 + c_{102} I_2^3 I_4 + c_{103} I_2^2 I_3^2 + c_{104} I_2^2 I_6 + c_{105} I_2 I_3 I_5 + c_{106} I_2 I_4^2 \right)}{\Delta}\\
& + \frac{ c_2^2 I_2^2  \left( c_{107} I_2 I_8 + c_{108} I_3^2 I_4 + c_{109} I_3 I_7 + c_{110} I_4 I_6 + c_{111} I_5^2 + c_{112} I_{10} \right) }{\Delta} \\
& = j_{1} \rho\left( \frac{\gamma_{20}^7}{\chi_{28}^5} \right) + j_{2} \rho\left( \frac{\gamma_{20}^5 \alpha_{12}}{\chi_{28}^4} \right) + j_{3} \rho\left( \frac{\gamma_{20}^4 \gamma_{30}^2}{\chi_{28}^5} \right) + j_{4} \rho\left( \frac{\gamma_{20}^4 \alpha_4}{\chi_{28}^3} \right)\\ 
& + j_{5} \rho\left( \frac{\gamma_{20}^3 \gamma_{30} \beta_{22}}{\chi_{28}^4} \right) + j_{6} \rho\left( \frac{\gamma_{20}^3 \alpha_{12}}{\chi_{28}^3} \right) + j_{7} \rho\left( \frac{\gamma_{20}^3 \gamma_{24}}{\chi_{28}^3} \right) + j_{8} \rho\left( \frac{\gamma_{20}^2 \gamma_{30}^2 \alpha_{12}}{\chi_{28}^4} \right)\\
& + j_{9} \rho\left( \frac{\gamma_{20}^2 \gamma_{30} \beta_{14}}{\chi_{28}^3} \right) + j_{10} \rho\left( \frac{\gamma_{20}^2 \alpha_{12} \alpha_4}{\chi_{28}^2} \right) + j_{11} \rho\left( \frac{\gamma_{20}^2 \beta_{22}^2}{\chi_{28}^3} \right) + j_{12} \rho\left( \frac{\gamma_{20}^2 \beta_{16}}{\chi_{28}^2} \right)
 \end{align*}
where the constants $c_k,d_k,e_k,f_k, g_k, h_k,i_k,j_k$ are computed in~\cite{Elisa}.
\end{proof}
\end{proposition}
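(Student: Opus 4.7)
The plan is to establish modularity of each of the nine absolute Shioda invariants in Equation~\eqref{Absolute_Shiodas} by reducing the claim to the known relationship between Siegel modular forms of degree $3$ and invariants of binary octavics. The key tool is Igusa's map $\rho$ from the graded ring of Siegel modular forms of degree $3$ onto the graded ring of invariants of binary octavics, together with the explicit expressions (due to Tsuyumine~\cite{Tsuyumine1} and refined by Lorenzo~García~\cite{Elisa}) of the generators $I_2,I_3,I_4,\ldots,I_{10}$ of the invariant ring as images under $\rho$ of specific monomials in $\gamma_{20},\gamma_{30},\alpha_{12},\alpha_4,\alpha_6,\beta_{14},\beta_{22},\beta_{16},\gamma_{24}$ divided by suitable powers of $\chi_{28}$.

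First I would recall the precise change-of-basis formulae expressing the Shioda projective invariants $J_i$ as $\Q$-linear combinations of the Tsuyumine invariants $I_j$ of the same degree; these are the constants $c_2,c_3,c_{4i},c_{5i},\ldots$ appearing in the statement. Once these are fixed, each absolute invariant $J_{i_1}^{a_1}\cdots J_{i_s}^{a_s}/\Delta^n$ in Equation~\eqref{Absolute_Shiodas} is a ratio of two homogeneous polynomials in the $I_j$'s with the same weight, and the denominator $\Delta$ is itself a $\Q$-linear combination of monomials in the $I_j$'s of degree $14$ (Section~1.5 of~\cite{LerRit}).

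Next, I would apply $\rho$ term by term. Since $\rho$ is a graded ring homomorphism with image in the modular function field after inverting $\chi_{28}$, each monomial $I_{j_1}^{b_1}\cdots I_{j_t}^{b_t}$ of weight $w$ pulls back to a specific quotient of a monomial in $\gamma_{20},\gamma_{30},\alpha_{12},\alpha_4,\alpha_6,\beta_{14},\beta_{22},\beta_{16},\gamma_{24}$ by a power of $\chi_{28}$. Multiplying numerator and denominator through by the appropriate power of $\chi_{28}$ and using that $\rho(\chi_{28})^5$ is proportional to $\Delta$ up to a known scalar, each absolute Shioda invariant becomes a ratio of two Siegel modular forms of level $1$, i.e.\ a modular function. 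Expanding the products explicitly yields the displayed formulae, with the coefficients $d_k,e_k,f_k,g_k,h_k,i_k,j_k$ coming from combining the base-change constants with the proportionality constant between $\Delta$ and $\rho(\chi_{28})^5$.

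The main obstacle is bookkeeping rather than conceptual: the invariants $J_5J_9/\Delta$, $J_2^5J_9^2/\Delta^2$, and $J_2^2J_{10}/\Delta$ expand into many terms because $J_9$ and $J_{10}$ each split into $8$--$12$ summands under the Tsuyumine basis, and $J_5J_9$ has $2\times 8$ cross-terms; so the effort is in carrying out the polynomial expansion carefully, grouping like monomials in the $\gamma,\alpha,\beta$ generators, and verifying that the resulting sum matches the displayed decomposition. Once this has been done for the hardest cases, the remaining invariants ($J_2^7/\Delta$, $J_2^4J_3^2/\Delta$, $J_2^5J_4/\Delta$, $J_2^4J_6/\Delta$, $J_7^2/\Delta$, $J_2^3J_8/\Delta$) follow from the same mechanism with much shorter expansions, which completes the proof.
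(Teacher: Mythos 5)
Your proposal follows exactly the paper's route: apply Igusa's map $\rho$ to the Tsuyumine generators, use Lorenzo García's change-of-basis relations between the $I_j$ and the Shioda $J_i$, and expand each absolute invariant into the displayed quotients of level-1 Siegel modular forms. The only difference is that you spell out the bookkeeping mechanism (including the role of $\chi_{28}^5$ versus $\Delta$) that the paper leaves implicit, so this is the same proof.
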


\section{Computing abelian varieties with CM}\label{algorithm}

In this section, we review well known definitions and results from the theory of complex multiplication, and give algorithms for the computation of several notions, such as the reflex field, the typenorm and the Galois action on CM points, with the final goal of stating an effective version of Shimura's second theorem of complex multiplication. All these algorithms were implemented in the computer algebra system SAGE~\cite{sagemath} and are called during the computation of class polynomials in Algorithm~\ref{Compute_Shimura_Reciprocity}. 

\subsection{Reflex field computation}\label{chapter3}
Let $K/\Q$ be a CM field and let $L$ be the Galois closure of $K$ with Galois group $Gal(L/\Q)$.
A \emph{CM type} of $K$ is a set $\Phi=\{\phi_1, \ldots, \phi_g\}$ of $g$ embeddings $K \into \C$ such that no 
two embeddings appearing in $\Phi$ are complex conjugates. We say that $\Phi$ is \emph{induced} from a CM subfield 
$K_0$ of $K$ if the set $\{ \phi|_{K_0}: \phi\in\Phi\}$ is a CM type of $K_0$.  A CM type of $K$ is called \emph{primitive} if it is not induced by a proper CM subfield $K'\subset K$. In this paper, we fix the tupel $\displaystyle\left( K, \Phi\right)$ and call it a \emph{CM-pair}. Since $L$ is a CM field (\cite[Cor. 1.5]{Milne_CM}), $\Phi$ extends to a CM type $\Phi_L$ of $L$, namely by 
\begin{equation}\label{Extended_CM_Type} 
\Phi_L = \{\phi: L \rightarrow \C:\ \phi|_{K} \in \Phi\}.
\end{equation}

We fix once and for all an embedding $\iota_K: K \rightarrow L$ and an embedding $\pi: L \rightarrow \C$. 
With these in hand, we identify elements in $\Phi_L$ with elements of the automorphism group $Gal(L/\Q)$ by associating to every $\phi\in\Phi$ an element $\sigma \in Gal(L/\Q)$ such that the following diagram commutes:
\begin{equation}
\label{Embeddings_Kr_into_L}
\begin{tikzcd}
   L \arrow[r,dashed, "\sigma"] & L \arrow[d, "\pi"]  \\
  K \arrow[u, "\rotatebox{90}{$\iota_{\tiny{K}}$}"] \arrow[r, "\phi"] &\C
\end{tikzcd}
\end{equation}
Note that this identification is certainly dependent on the embeddings $\iota_K$ and  $\pi$.

Let \begin{math} \label{Lifted_Reflex_Type}
\Phi_L^{-1} = \{\pi\circ \sigma^{-1}\in \Aut(L): \phi = \pi\circ \sigma\ \text{for}\ \phi\in \Phi_L\}. 
\end{math} One can easily check that $\Phi_L^{-1}$ is a CM type on $L$ if and only if $\Phi_L$ is a CM type on $L$. 
 We denote by $H^r$ the subgroup of $\Gal(L/\Q)$ of the form  
\begin{equation}\label{Reflex_Field_}
H^r  = \{\sigma\in \Gal(L/\Q): \Phi^{-1}_L \sigma= \Phi^{-1}_L\} = \{\sigma\in \Gal(L/\Q): \sigma\Phi_L = \Phi_L \}.
\end{equation} 
\begin{definition}\label{Reflex_Field}
The subfield of $L$ fixed by the the group $H^r$ in Equation~$\eqref{Reflex_Field_}$ is called the \emph{reflex field} of $(K, \Phi)$. We denote it by $K^r$. 
\end{definition}
Note that, from a computational point of view, choosing $K^r$ as the field fixed by $H^r$ also means fixing the embedding $\iota_{K^r}: K^r \rightarrow L$.

As shown for instance in~\cite[Prop. 1.18]{Milne_CM}, $K^r$ is also a CM field and the associated CM type to $K^r$ is given by the following construction:
\begin{equation}\label{Reflex_Type}
\Phi^r = \Phi^{-1}_L|_{K^r} = \{\phi|_{K^r}: \phi \in \Phi^{-1}_L\}.
\end{equation}   
We call the tuple $(K^r, \Phi^r)$ the \emph{reflex CM-pair} of $(K,\Phi)$. We summarize briefly our procedure for computing $(K^r, \Phi^r)$ based on Definition~\ref{Reflex_Field} in Algorithm~\ref{Compute_Reflex}, in Appendix~\ref{algorithms}.

\subsection{The reflex typenorm}\label{image_typenorm}
Let $\displaystyle (K, \Phi)$ be a primitive CM-pair with Galois closure $\displaystyle L$ of $K$ and reflex CM-pair $\displaystyle (K^r,\Phi^r)$. The \emph{reflex typenorm} is the map 
\begin{equation}\label{reflex_type_norm_formula}
\begin{split} 
N_{\Phi^r}:\ K^r\rightarrow K   \subset L,\ 
 x \mapsto \prod_{\phi\in\Phi^r} \phi(x).
\end{split}
\end{equation}
Since $(K^r)^r=K$, it follows that $K$ is fixed by the group $\displaystyle H = \{ \sigma \in \Gal(L/\Q): \sigma \Phi^r_L = \Phi^r_L\}$, one can easily check that $\displaystyle N_{\Phi^r}$ lies in $K$.
\begin{lemma}\label{Reflex_type_norm_on_ideals}
The reflex typenorm in Equation~$\eqref{reflex_type_norm_formula}$ 
induces a map between ideals and 
\begin{eqnarray*}
  N_{\Phi^r}:  I(K^r) \rightarrow I(K),~
  \mathfrak{a} \mapsto \prod_{\phi\in\Phi^r} \phi(\mathfrak{a}).
\end{eqnarray*} 
which extends to a homomorphism between class groups~$N_{\Phi^r}:  Cl(K^r) \rightarrow Cl(K)$.
\begin{proof}
Let $\fraka$ be an ideal in the reflex field $K^r$. To prove the Lemma we follow the idea from M.Streng \cite{Streng_RTypeNorm} and prove the existence of an ideal $\fraka'$ in $K$ such that its lift $\fraka'\calO_L$ corresponds to the ideal in the reflex typenorm of $\fraka$. With other words we have to show the equality 
\begin{equation*}\label{prove_ideal_equality}
\fraka' \calO_L = N_{\Phi^r}(\fraka)\calO_L.
\end{equation*}
To prove the equality above we use some standard constructions in combination with the representation of $1$ by the gcd. Let $\alpha, \gamma$ be elements in $\fraka$ such that 
\begin{eqnarray*} 
1 = \gcd\left( \gamma\fraka^{-1}, \left( N_{K^r/\Q}(\alpha) \right) \right).
\end{eqnarray*}
By construction every $\phi\in\Phi^r$ correspond to an $L$-automorphism which does not change the value of the norm of $\alpha$. By replacing the sign of the norm by its definition we write the $\gcd$ as 
\begin{eqnarray*}
1 = \gcd\left( \prod_{\phi\in\Phi^r}\phi(\gamma)\phi(\fraka^{-1}), \left( \prod_{\sigma\in Gal(L/\Q)} \alpha^\sigma \right)  \right).
\end{eqnarray*} 
Let now $\delta,\beta$ be elements in $K$, and $\frakc^{-1}$ an fractional ideal in $K$ of the form
\begin{eqnarray*}
\delta = \prod_{\phi\in\Phi^r} \phi(\gamma), ~ \beta = \prod_{\phi\in \Phi^r} \phi(\alpha), ~ \frakc^{-1} = \prod_{\phi\in\Phi^r}\phi(\fraka^{-1}).
\end{eqnarray*}
By construction the elements $\delta, \beta$ belong to $K$, since our CM types are primitive and the CM field $K$ is fixed by the group $H$ above. We claim that  
\begin{eqnarray*} 
(1)\calO_L = \delta\frakc^{-1} + \beta\calO_L.
\end{eqnarray*} 
This equality follows immediately from the above representation of $1$ by the $\gcd$. Multiplication by $\frakc$ provides an equation of the form
\begin{eqnarray*}
\frakc = \frakc\beta\calO_L + \gamma.
\end{eqnarray*}
Let now $\fraka' = \beta\calO_K + \delta\calO_K$ be the $\calO_K$-ideal and consider its lift $\fraka'\calO_L = \beta\calO_L + \gamma\calO_L$. We claim that $\frakc = \fraka'\calO_L$. First $\frakc \subset \fraka'\calO_L$ which follows immediately from the inclusion $\frakc\beta\calO_L \subset \beta\calO_L$, and second $\fraka'\calO_L \subset \frakc$ since $\beta\in\frakc$ after construction. Further since principal ideals map to principal ideals via the reflex typenorm, we conclude the properties of the induced map on the class groups.
\end{proof}
\end{lemma}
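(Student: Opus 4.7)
The plan is to prove the lemma by a Galois descent argument. The structural fact that makes the construction work is that, since $(K^r)^r = K$, the field $K$ is precisely the fixed field in $L$ of the subgroup
\begin{equation*}
H = \{\sigma \in \Gal(L/\Q) : \sigma\Phi^r_L = \Phi^r_L\} = \Gal(L/K),
\end{equation*}
so any element of $L$ or any ideal of $\calO_L$ that is stable under $H$ descends canonically to $K$.

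First I would view the product $\frakb := \prod_{\phi\in\Phi^r}\phi(\fraka)\calO_L$ as an ideal of $\calO_L$, and verify that it is $H$-stable. The defining identity $\sigma\Phi^r_L = \Phi^r_L$ forces, upon restriction to $K^r$, the equality $\{\sigma\circ\phi : \phi\in\Phi^r\} = \Phi^r$ as sets of embeddings $K^r\to L$, whence
\begin{equation*}
\sigma(\frakb) \;=\; \prod_{\phi\in\Phi^r}(\sigma\circ\phi)(\fraka)\calO_L \;=\; \prod_{\phi\in\Phi^r}\phi(\fraka)\calO_L \;=\; \frakb.
\end{equation*}
By Galois descent for fractional ideals there is then a unique ideal $\fraka'\subset\calO_K$ with $\fraka'\calO_L = \frakb$, and I would take $N_{\Phi^r}(\fraka) := \fraka'$.

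The remaining statements follow quickly. Multiplicativity $N_{\Phi^r}(\fraka\frakc) = N_{\Phi^r}(\fraka)N_{\Phi^r}(\frakc)$ is inherited from that of $\fraka\mapsto\prod_\phi\phi(\fraka)\calO_L$, via injectivity of extension of ideals from $\calO_K$ to $\calO_L$. If $\fraka = (\alpha)$ is principal, the element-level reflex typenorm $N_{\Phi^r}(\alpha) = \prod_{\phi}\phi(\alpha)$ already lies in $K$ by the same $H$-invariance argument applied to $\alpha$, and $\frakb = (N_{\Phi^r}(\alpha))\calO_L$; uniqueness of descent then gives $N_{\Phi^r}(\fraka) = (N_{\Phi^r}(\alpha))$ inside $\calO_K$. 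Hence $N_{\Phi^r}$ sends principal ideals to principal ideals, and the induced map descends to a well-defined homomorphism $\Cl(K^r)\to\Cl(K)$.

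The main obstacle is the bookkeeping involved in translating the stabilizer condition $\sigma\Phi^r_L = \Phi^r_L$, which lives at the level of embeddings of $L$, into the corresponding permutation statement $\sigma\circ\Phi^r = \Phi^r$ for embeddings of $K^r$; once this translation is carefully made and the identification $\Gal(L/K) = H$ is invoked, the descent step itself is formal. This is a somewhat cleaner route than the author's explicit construction of $\fraka'$ via a $\gcd$-representation of $1$, which in effect produces the same descended ideal by hand.
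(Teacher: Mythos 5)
Your high-level strategy --- descend the $\calO_L$-ideal $\frakb=\prod_{\phi\in\Phi^r}\phi(\fraka)\calO_L$ to $\calO_K$ using its invariance under $H=\Gal(L/K)$ --- is genuinely different from the paper's argument, and the parts of your write-up that translate $\sigma\Phi^r_L=\Phi^r_L$ into $\{\sigma\circ\phi:\phi\in\Phi^r\}=\Phi^r$, that handle multiplicativity, and that treat principal ideals are correct. The problem is the single sentence ``By Galois descent for fractional ideals there is then a unique ideal $\fraka'\subset\calO_K$ with $\fraka'\calO_L=\frakb$.'' There is no such descent principle for ideals: the extension map $I(K)\to I(L)^{H}$ is injective but not surjective. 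Concretely, take $K=\Q$ and $L=\Q(i)$; the ideal $(1+i)$ is stable under $\Gal(L/K)$ but is not of the form $\fraka'\Z[i]$ for any ideal $\fraka'$ of $\Z$. In general, $H$-stability of $\frakb$ only forces the exponent $v_{\frakP}(\frakb)$ to be constant as $\frakP$ runs over the primes of $L$ above a fixed prime $\frakp$ of $K$; for $\frakb$ to be extended from $K$ one additionally needs $e(\frakP\mid\frakp)$ to divide $v_{\frakP}(\frakb)$, and nothing in your argument supplies this divisibility at the primes that ramify in $L/K$.

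This is exactly the difficulty the paper's proof (following Streng) is designed to circumvent: the gcd trick produces two generators $\beta,\delta$ of $\frakb$ that already lie in $K$, so that $\frakb=\beta\calO_L+\delta\calO_L$ is visibly extended from $\fraka'=\beta\calO_K+\delta\calO_K$. Your argument does work verbatim for principal ideals, since there the generator $N_{\Phi^r}(\alpha)$ lies in $K=L^H$ by honest Galois descent for \emph{elements}, and one can salvage your approach by upgrading it from ideals to ideles: since $\A_L=\A_K\otimes_K L$, one has $(\A_L^\times)^H=\A_K^\times$, the typenorm of an idele of $K^r$ is $H$-invariant by your permutation computation, and passing from ideles to ideals then yields the required $\fraka'$. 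As written, however, the descent step is a genuine gap, not a simplification.
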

When computing the typenorm of an ideal $\mathfrak{a}$, the product $\prod_{\phi\in\Phi^r} \phi(\mathfrak{a})$ gives à priori an ideal in $L$. To identify the ideal in $K$ lying below this ideal, we first compute the factorization of this ideal and rely on an algorithm in~\cite[Algorithm 2.5.3]{Cohen} to get the prime ideal lying below each of the ideals appearing in this factorization. The Algorithm \ref{Type_Norm_Computation} in Appendix~\ref{algorithms} describes briefly the computation of the reflex typenorm.

\subsection{Class field theory}
For a number field $\displaystyle K$ and a (finite) modulus $\displaystyle \mathfrak{m}$, i.e. a finite product of prime ideals in $K$, let $\displaystyle I_{\frakm}(K)$ be the group of all fractional $\displaystyle \cO_{K}$ ideals coprime to $\displaystyle \mathfrak{m}$, with subgroup 
\begin{equation*} 
\begin{split}
P_{\frakm}(K) & = \left\{ \mathfrak{a}\in I_{\frakm}(K): \mathfrak{a} = \alpha\cO_{F},\  \alpha \equiv 1\ (\bmod^* \mathfrak{m})\right\},
\end{split}
\end{equation*}
where the congruence $\displaystyle \alpha \equiv 1\ (\bmod^*\mathfrak{m})$ means that for all primes $\displaystyle \mathfrak{p}$ appearing in the factorisation of $\displaystyle \mathfrak{m}$ 
we have $\displaystyle \nu_\mathfrak{p}(\alpha - 1) \ge \nu_\mathfrak{p}(\mathfrak{m})$. The \emph{ray class group} of $K$ for the modulus $\mathfrak{m}$ is defined as the quotient group $Cl_\mathfrak{m}(K) = I_{\frakm}(K)/P_{\frakm}(K)$.

For a modulus $\displaystyle\mathfrak{m}$ in $K$ there is a unique Abelian extension $\displaystyle /K$, denoted by $\displaystyle \mathcal{H}(\frak{m})$, all of whose ramified primes divide $\displaystyle \mathfrak{m}$ s.t. the  kernel of the \emph{Artin Map}
\begin{equation*}
\displaystyle \Phi_{\mathfrak{m}}: I_{K}(\mathfrak{m}) \rightarrow Gal(\mathcal{H}_\mathfrak{m}/K)
\end{equation*}
is equal to $\displaystyle P_K(\mathfrak{m})$. The field $\displaystyle \mathcal{H}(\frak{m})$ is called the \emph{ray class field} of $K$ of modulus $\frakm$ (see for instance~\cite[Theorem 8.6.]{Cox}). If $\frakm = (1)$ then $Cl_\frakm(K)$ is the ordinary ideal class group $Cl(K)$ and $\mathcal{H}(1)$ is the \emph{Hilbert class field} $H$ of $K$. 

Let $\displaystyle (K, \Phi)$ be a primitive CM-pair with Galois closure $\displaystyle L$ and reflex CM-pair $\displaystyle (K^r,\Phi^r)$. Applying the reflex typenorm $N_{\Phi^r}$ on fractional ideals $\mathfrak{a} \in I_{K^r}(\mathfrak{m})$ coprime to $\frakm$ yields a map 
$N_{\Phi^r}:I_\mathfrak{m}(K^r) \rightarrow I_\mathfrak{m}(K)$ and induces a group homomorphism between the ray class groups
$N_{\Phi^r}: Cl_\mathfrak{m}(K^r) \rightarrow Cl_\mathfrak{m}(K)$. Now let  
\begin{equation}\label{H_Kr}
H_{\mathfrak{m}}(K^r) = \left\{ \mathfrak{a} \in I_{\mathfrak{m}}(K^r):\ \begin{split} & \exists \alpha\in K^* \ \text{with} & N_{\Phi^r}(\mathfrak{a}) = \alpha\calO_K,\\ &  N_{K/\Q}(\mathfrak{a}) = \alpha\overline{\alpha}, & \alpha \equiv 1\ (mod\ ^*\mathfrak{m})  \end{split}  \right\}
\end{equation}
and note that $\displaystyle P_{\mathfrak{m}}(K^r) \subset H_{\mathfrak{m}}(K^r)\subset I_{\mathfrak{m}}(K^r)$. Then, after \cite[Theorem 8.6.]{Cox}, there is a unique Abelian extension of $K^r$, denoted by $CM_{\frakm}(K^r)$, such that
\begin{eqnarray*}
\Gal(CM_{\frakm}(K^r)/K^r)\cong I_{\frakm}(K^r)/H_{\frakm}(K^r). 
\end{eqnarray*}

\begin{lemma}\label{Kernel_Reflex_Type_Norm}
Let $N_{\Phi^r}: Cl_\mathfrak{m}(K^r) \rightarrow Cl_\mathfrak{m}(K)$ be the reflex typenorm map. Then
\begin{itemize}  
\item[(a)] The kernel of this map is given by the quotient 
\begin{equation*} 
\ker N_{\Phi^r} = H_{\mathfrak{m}}(K^r)/P_{\mathfrak{m}}(K^r).
\end{equation*}
\item[(b)] The image of this map is isomorphic to 
$\displaystyle I_{\mathfrak{m}}(K^r)/H_{\mathfrak{m}}(K^r)$.
\end{itemize}
\end{lemma}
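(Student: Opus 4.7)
My plan is to prove part (a) directly and to derive (b) as a formal consequence via the first isomorphism theorem: once $\ker N_{\Phi^r} = H_{\mathfrak{m}}(K^r)/P_{\mathfrak{m}}(K^r)$ is established, the image of $N_{\Phi^r}\colon Cl_{\mathfrak{m}}(K^r) \to Cl_{\mathfrak{m}}(K)$ is canonically isomorphic to $\bigl(I_{\mathfrak{m}}(K^r)/P_{\mathfrak{m}}(K^r)\bigr)\big/\bigl(H_{\mathfrak{m}}(K^r)/P_{\mathfrak{m}}(K^r)\bigr) \cong I_{\mathfrak{m}}(K^r)/H_{\mathfrak{m}}(K^r)$, which gives (b).

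For (a), I would first verify the chain $P_{\mathfrak{m}}(K^r) \subseteq H_{\mathfrak{m}}(K^r) \subseteq I_{\mathfrak{m}}(K^r)$ so that the quotient is well defined. For a principal ideal $\mathfrak{a} = \beta\,\mathcal{O}_{K^r}$ with $\beta \equiv 1\ (\bmod^*\mathfrak{m})$, the element $\alpha = N_{\Phi^r}(\beta) = \prod_{\phi\in\Phi^r}\phi(\beta)$ lies in $K$ by Lemma~\ref{Reflex_type_norm_on_ideals}, satisfies $\alpha \equiv 1\ (\bmod^*\mathfrak{m})$ because each factor does, and $\alpha\bar\alpha = N_{K^r/\Q}(\beta)$ because $\Phi^r \sqcup \overline{\Phi^r}$ is the full set of complex embeddings of $K^r$. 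The inclusion $H_{\mathfrak{m}}(K^r)/P_{\mathfrak{m}}(K^r) \subseteq \ker N_{\Phi^r}$ is then immediate from the definition of $H_{\mathfrak{m}}(K^r)$, since $[N_{\Phi^r}(\mathfrak{a})]$ is trivial in $Cl_{\mathfrak{m}}(K)$.

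The substantive step is the reverse inclusion. Given $\mathfrak{a} \in I_{\mathfrak{m}}(K^r)$ whose image in $Cl_{\mathfrak{m}}(K)$ is trivial, I obtain some $\alpha_0 \in K^\times$ with $N_{\Phi^r}(\mathfrak{a}) = \alpha_0 \mathcal{O}_K$ and $\alpha_0 \equiv 1\ (\bmod^*\mathfrak{m})$. Using again that $\Phi^r \sqcup \overline{\Phi^r}$ covers every complex embedding of $K^r$, one obtains the ideal identity $\alpha_0\bar\alpha_0\,\mathcal{O}_K = N_{K^r/\Q}(\mathfrak{a})\,\mathcal{O}_K$, so that $u = \alpha_0\bar\alpha_0 / N_{K^r/\Q}(\mathfrak{a})$ is a totally positive unit in $\mathcal{O}_{K^+}^\times$, where $K^+$ denotes the maximal totally real subfield of $K$.

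The main obstacle is promoting this ideal identity to the elementwise equality $\alpha\bar\alpha = N_{K^r/\Q}(\mathfrak{a})$ required by the definition of $H_{\mathfrak{m}}(K^r)$. I would look for $\eta \in \mathcal{O}_K^\times$ with $\eta\bar\eta = u$ and $\eta \equiv 1\ (\bmod^*\mathfrak{m})$, and then replace $\alpha_0$ by $\alpha = \alpha_0/\eta$; the ideal generated does not change, the congruence persists, and the elementwise norm identity now holds. The existence of such an $\eta$ with $\eta\bar\eta = u$ is controlled by Hasse's unit index for the CM extension $K/K^+$, while arranging simultaneously the congruence modulo $\mathfrak{m}$ can require preliminary adjustment by suitable powers of available units. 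This unit-theoretic bookkeeping is the delicate part of the argument, and is exactly where a careful proof has to invoke either Shimura's treatment of the reflex typenorm or an analogue of the construction in~\cite{Streng_RTypeNorm}.
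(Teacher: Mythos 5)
Your overall architecture coincides with the paper's: part (b) is deduced from part (a) by the first isomorphism theorem, exactly as in the paper's proof, and part (a) is attacked via the two inclusions, with the containment $H_{\mathfrak{m}}(K^r)/P_{\mathfrak{m}}(K^r)\subseteq\ker N_{\Phi^r}$ being immediate from the definition of $H_{\mathfrak{m}}(K^r)$ in both treatments. Your preliminary verification that $P_{\mathfrak{m}}(K^r)\subseteq H_{\mathfrak{m}}(K^r)$, using $\alpha=N_{\Phi^r}(\beta)$ and the fact that $\Phi^r\sqcup\overline{\Phi^r}$ exhausts the complex embeddings of $K^r$, is more explicit than anything in the paper and is correct.

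The gap is in the reverse inclusion $\ker N_{\Phi^r}\subseteq H_{\mathfrak{m}}(K^r)/P_{\mathfrak{m}}(K^r)$, and you have located it precisely but not closed it. From $N_{\Phi^r}(\mathfrak{a})=\alpha_0\mathcal{O}_K$ with $\alpha_0\equiv 1\ (\bmod^*\mathfrak{m})$ one only obtains the ideal identity $\alpha_0\overline{\alpha_0}\,\mathcal{O}_K=N_{K^r/\Q}(\mathfrak{a})\,\mathcal{O}_K$, i.e.\ an equality of elements up to a totally positive unit $u$ of the maximal totally real subfield, whereas the definition of $H_{\mathfrak{m}}(K^r)$ demands the exact elementwise identity. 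Your proposed repair --- replace $\alpha_0$ by $\alpha_0/\eta$ for a unit $\eta$ with $\eta\overline{\eta}=u$ and $\eta\equiv 1\ (\bmod^*\mathfrak{m})$ --- is the right idea, but the existence of such $\eta$ is not automatic: for a CM field $\mathcal{O}_K^\times$ is $\mu_K\cdot\mathcal{O}_{K^+}^\times$ up to index $2$, so the relative norm of a unit lies in $(\mathcal{O}_{K^+}^\times)^2$ up to the Hasse unit index, and not every totally positive unit need be such a norm; the extra congruence constraint on $\eta$ only makes this harder. As written your argument therefore establishes unconditionally only the containment $H_{\mathfrak{m}}(K^r)/P_{\mathfrak{m}}(K^r)\subseteq\ker N_{\Phi^r}$. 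You should know, however, that the paper's own proof is no more complete at exactly this point: it writes $N_{\Phi^r}(\mathfrak{a})\overline{N_{\Phi^r}(\mathfrak{a})}=\alpha\overline{\alpha}=N_{K/\Q}(\mathfrak{a})$ as though the generator were canonical, silently assuming the very unit adjustment you flag, and it likewise does not justify that a generator satisfying both the norm identity and the congruence can be chosen simultaneously. A genuinely complete argument either passes through the Shimura class group of pairs $(\mathfrak{b},\beta)$ with $\mathfrak{b}\overline{\mathfrak{b}}=\beta\mathcal{O}_K$, into which the typenorm maps naturally and where the kernel computation is clean, or verifies the relevant unit condition for the fields under consideration.
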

\begin{proof}
(a) Let us show that if $\displaystyle \mathfrak{a}\in I_\mathfrak{m}(K^r)$  with $\displaystyle \mathfrak{a}$ an element in the kernel of $N_{\Phi^r}$, then $\displaystyle \mathfrak{a}$ lies in $\displaystyle H_{\mathfrak{m}}(K^r)$. For every $\displaystyle \mathfrak{a}\in \ker N_{\Phi^r}$ there exists an element $\displaystyle \alpha\in K^*$, such that $\displaystyle N_{\Phi^r}(\mathfrak{a}) = (\alpha) \calO_K$ is principal. The reflex typenorm is a half norm, i.e. 
$\displaystyle N_{\Phi^r}(\mathfrak{a})\overline{N_{\Phi^r}(\mathfrak{a})} = \alpha\overline{\alpha} = N_{K/\Q}(\mathfrak{a})$.
The last congruence in Equation~(\ref{H_Kr}) follows from the fact that the reflex typenorm map of an ideal coprime to $\mathfrak{m}$ is coprime to $\mathfrak{m}$. It follows that $\displaystyle \mathfrak{a}\in H_{\mathfrak{m}}(K^r)$ and therefore $\displaystyle \ker N_{\Phi^r} \subset H_{\mathfrak{m}}(K^r)/P_{\mathfrak{m}}(K^r)$. The inclusion $\displaystyle H_{\mathfrak{m}}(K^r)/P_{\mathfrak{m}}(K^r) \subset \ker N_{\Phi^r}$ follows immediately from the definition of $H_{\frakm}(K^r)$.

\noindent
(b) It follows immediately from point (a): 
\begin{equation*}
\begin{split}
 N_{\Phi^r}(Cl_{\frakm}(K^r)) & \cong Cl_{\mathfrak{m}}(K^r)/\ker N_{\Phi^r}\\
& \cong \left(I_{\mathfrak{m}}(K^r)/P_{\mathfrak{m}}(K^r) \right)/\left(H_{\mathfrak{m}}(K^r)/P_{\mathfrak{m}}(K^r) \right)\\
&  \cong I_{\mathfrak{m}}(K^r)/H_{\mathfrak{m}}(K^r). 
\end{split}
\end{equation*}
\vspace*{-0.2 cm}
\end{proof}

\subsection{CM abelian varieties} 
\label{Sec: CM} Let $\calO$ be an order of the ring of integers of the CM field $K$. 
We say that an abelian variety $A$ defined over a field $k$ \emph{has CM by $\calO$} if there exists an 
embedding $\calO \into \End(A)$, where $\End(A)$ is the geometric endomorphism ring of $A$. In this 
article we focus on the case where $\End(A) \cong \calO_K$, the ring of integers of $K$. 

Let $\frakD_{K/\Q}$ be the different of $K$, and let $\fraka$ be a fractional ideal of $K$. 
Suppose that the ideal $(\frakD_{K/\Q}\fraka\overline{\fraka})^{-1}$ is principal and generated by 
$\xi\in K^{\times}$ such that $\im(\phi(\xi))>0$ for all $\phi\in\Phi$.
Then by tensoring the map 
\begin{align*}
(\Phi(\fraka), \Phi(\fraka)) &\rightarrow \Q, ~~~~~~~~~~~~~\ (\Phi(x),\Phi(x)) \mapsto \Tr_{K/\Q}(\xi\overline{x}y)
\end{align*}
with $\R$ we obtain a Riemann form $E_{\Phi,\xi} : \C^g \times \C^g \rightarrow \R$. 
Hence for any triple $(\Phi, \fraka, \xi)$ as above, the pair $(\C^g/\Phi(\fraka), E_{\Phi, \xi})$ is a 
p.p.a.v. of dimension $g$ with CM by $\calO_K$ and of type $\Phi$. We will also denote it by $A(\Phi, \fraka, \xi)$.
Conversely, every p.p.a.v. of dimension $g$ with CM by $\calO_K$ is 
isomorphic to $A(\Phi, \fraka, \xi)$ for some triple $(\Phi, \fraka, \xi)$ as above. The abelian variety 
$\calA(\Phi, \fraka, \xi)$ is \emph{simple} if and only if $\Phi$ is primitive. 
Given a primitive CM type, we denote the set of p.p.a.v. obtained in this way by $\Princ(K, \Phi)$.   
Note that to go from the triple $(\Phi, \fraka, \xi)$ to a period matrix as described in Section~\ref{Sec:periodmats}, 
it suffices to write a basis for the ideal $\fraka$ that is symplectic with respect to the Riemann form $E_{\Phi, \xi}$. 
This basis gives the matrix $\Omega$, and then the period matrix is simply $Z = \Omega_2^{-1}\Omega_1$.

Let $(A,E)$ be a p.p.a.v. with CM by $K$ and $G$ the automorphism group of $A$. We denote by $V$ the Kummer variety, i.e. the quotient variety of $A$ by $G$. Let $F:A\rightarrow V$ be the corresponding onto map.   

\begin{theorem}\cite[Main Theorem 2]{Shimura}\label{ShimuraSecondMainTheorem}
Let $(A,E)$ be a principally polarized abelian variety with complex multiplication by $K$ and CM type $\Phi$ and $(V,F)$ its Kummer variety. Let $\mathfrak{m}$ be an integral ideal of $K$ and let $m$ be the smallest integer divisible by $\mathfrak{m}$. 
Let $K^r$ be the reflex field of $K$ and $H_{\mathfrak{m}}(K^r)$ the ideal group modulo $\frakm$ defined in 
Equation~$\eqref{H_Kr}$. Let $x$ be a $\mathfrak{m}$-torsion point on $A$, $k$ the field of moduli of $A$ and $k^*$
 be the composite of $k$ and $K^r$. Then $k^*(F(x))$ is the class field corresponding to the ideal group $H_{\frakm}(K^r)$. 
\end{theorem}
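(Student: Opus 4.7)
The plan is to combine Shimura and Taniyama's First Main Theorem of Complex Multiplication with the analytic uniformization of $A$ to track the Galois action on $\frakm$-torsion points, and then to recognize the stabilizer of $F(x)$ as the ideal group $H_{\frakm}(K^r)$ defined in Equation~\eqref{H_Kr}. First I would replace $(A,E)$ by its analytic model: by the discussion in Section~\ref{Sec: CM}, we may assume $A=\C^g/\Phi(\fraka)$ for some fractional ideal $\fraka\subset K$, with Riemann form built from some $\xi\in K^{\times}$. Under this uniformization, the group of $\frakm$-torsion points of $A$ (for the CM action) is identified with $\frakm^{-1}\fraka/\fraka$, so the fixed point $x$ corresponds to a class $u\in\frakm^{-1}\fraka/\fraka$.

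The main input is then the First Main Theorem of Shimura--Taniyama. For every prime $\frakp$ of $K^r$ unramified in the compositum of $L$, $k$, and the field of definition of $x$, and lying above a rational prime of good reduction for $A$, the Artin symbol $\sigma_{\frakp}$ sends the triple $(A,\iota,x)$ (with $\iota$ the CM action) to $(A^{\sigma_{\frakp}},\iota^{\sigma_{\frakp}},x^{\sigma_{\frakp}})$, where $A^{\sigma_{\frakp}}$ has analytic uniformization $\C^g/\Phi(N_{\Phi^r}(\frakp)^{-1}\fraka)$, the map $A\to A^{\sigma_{\frakp}}$ is induced by the inclusion of lattices $\fraka\hookrightarrow N_{\Phi^r}(\frakp)^{-1}\fraka$, and $x^{\sigma_{\frakp}}$ is the image of $u$ under this map. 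This reformulates the Galois action on $F(x)$ purely in terms of arithmetic in $K$ and $K^r$.

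The heart of the argument is to pin down exactly when $\sigma_{\frakp}$ fixes $F(x)$. Since $F$ is the quotient of $A$ by $\Aut(A)$, we need $A^{\sigma_{\frakp}}$ to be isomorphic to $A$ and the image of $u$ to coincide with $\gamma(u)$ for some $\gamma\in\Aut(A)$. In the analytic model this translates to the existence of $\alpha\in K^{\times}$ with $N_{\Phi^r}(\frakp)=\alpha\calO_K$, with multiplication by $\alpha$ an isomorphism of polarized varieties (which, by compatibility with the Riemann form $E_{\Phi,\xi}$, forces $N_{K/\Q}(\frakp)=\alpha\overline{\alpha}$), and $\alpha u\equiv u \pmod{\fraka}$. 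The last condition reads $\alpha\equiv 1\,(\bmod^{*}\frakm)$, matching term-by-term the three conditions defining $H_{\frakm}(K^r)$ in Equation~\eqref{H_Kr}. The Existence Theorem of class field theory (together with \v{C}ebotarev density to pass from unramified primes to all of $I_{\frakm}(K^r)$) then forces $k^{*}(F(x))$ to be the unique abelian extension of $k^{*}$ whose associated ideal group under the Artin map is $H_{\frakm}(K^r)$.

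The main obstacle, I expect, lies in the careful handling of the Kummer quotient and the automorphism group $\Aut(A)$: for a generic CM abelian variety one has $\Aut(A)=\{\pm 1\}$, so a priori the condition on $u$ should only weaken to $\alpha u\equiv \pm u \pmod{\fraka}$ after passing from $A$ to $V$, which would produce a subgroup of $I_{\frakm}(K^r)$ strictly larger than $H_{\frakm}(K^r)$. Resolving this requires verifying that the sign ambiguity is already absorbed into the composite $k^{*}=k\cdot K^r$, or equivalently that the field of moduli $k$ already accounts for the nontrivial automorphism $[-1]$ of $A$. The same care is needed in special CM cases such as $\Q(i)\subset K$ or $\Q(\zeta_3)\subset K$, where $\Aut(A)$ is strictly larger and one must match the enlarged stabilizer against a correspondingly modified $H_{\frakm}(K^r)$.
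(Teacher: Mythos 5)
The paper does not actually prove this statement: it is quoted as \cite[Main Theorem 2]{Shimura}, and the only gloss the authors give is the remark immediately after it, namely that Shimura establishes the result by identifying $\Gal(k^*(F(x))/K^r)$ with $I_{\frakm}(K^r)/H_{\frakm}(K^r)$ via the action of Definition~\ref{Group_Action_On_ppav}. So there is no in-paper proof to compare against; what you have written is a reconstruction of Shimura's own argument, and as a high-level outline it follows the standard route correctly: analytic uniformization $A\cong\C^g/\Phi(\fraka)$, identification of the $\frakm$-torsion with $\frakm^{-1}\fraka/\fraka$, the First Main Theorem to describe the Frobenius action through the reflex type norm, computation of the stabilizer of $F(x)$, and the Existence Theorem together with \v{C}ebotarev to pass from unramified primes to all of $I_{\frakm}(K^r)$.

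One correction on the point you flag as the main obstacle. The ambiguity coming from $\Aut(A)$ is not absorbed by the composite $k^*=k\cdot K^r$; it is absorbed by the existential quantifier over $\alpha$ in Equation~\eqref{H_Kr}. A generator $\alpha$ of the principal ideal $N_{\Phi^r}(\fraka)$ subject to the half-norm condition $\alpha\overline{\alpha}=N(\fraka)$ is determined exactly up to a unit $u\in\calO_K^\times$ with $u\overline{u}=1$, i.e.\ up to a root of unity of $K$, and the polarization-preserving automorphisms of $A$ are precisely multiplication by such roots of unity (this is why $V$ is the quotient by $G=\Aut(A)$, which is $\{\pm 1\}$ generically and larger exactly when $K$ contains extra roots of unity). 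Hence the condition ``$\alpha u\equiv \gamma(u)\pmod{\fraka}$ for some $\gamma\in G$'' is literally the same as ``there exists a generator $\alpha$ with $\alpha\equiv 1\ (\bmod^{*}\frakm)$'' once $\alpha$ ranges over the full coset of admissible generators, and no modification of $H_{\frakm}(K^r)$ is needed in the cases $\Q(i)\subset K$ or $\Q(\zeta_3)\subset K$. With that repair your sketch matches the argument in Shimura's book; the genuinely hard content that your outline takes as input is the First Main Theorem itself (reduction modulo $\frakp$, the Shimura--Taniyama congruence, and the descent of the Kummer variety to the field of moduli), which is where the bulk of Shimura's proof lives.
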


\noindent
Shimura proves this theorem by showing that $\Gal(k^*(F(x))/K^r)$ is isomorphic to $I_{\mathfrak{m}}(K^r)/H_{\mathfrak{m}}(K^r)$. Moreover, the Galois conjugates of $F(x)$ are obtained by considering an action of $I_{\mathfrak{m}}(K^r)/H_{\mathfrak{m}}(K^r)$ on the set of p.p.a.v. with CM by $(K,\Phi)$ and a fixed $\frakm$-torsion point. We denote this set here by $\Princ(K,\Phi,\frakm)$. 

\begin{definition}\label{Group_Action_On_ppav}
Let $x$ be a fixed point on the abelian variety $A(\Phi, \fraka ,\xi)$. 
\begin{equation*}
\begin{alignedat}{2}
 I_{\mathfrak{m}}(K^r)/H_{\mathfrak{m}}(K^r)& \times  \Princ(K,\Phi,\frakm) && \longrightarrow \Princ(K,\Phi,\frakm),\\
 ([\frakc] , & A(\fraka,\xi,x\bmod \fraka)) && \longmapsto A(N_{\Phi^r}(\frakc)^{-1}\fraka, N_{K^r/\Q}(\frakc)\xi, x\bmod {N_{\Phi^r}(\frakc)^{-1}\fraka}).
\end{alignedat}
\end{equation*}
\end{definition}

\noindent
This action will be crucial in our computation of Galois conjugates. By examining Equation~\eqref{Group_Action_On_ppav}, we see that in actual computations, to enumerate the Galois conjugates for a CM point, one can use directly $N_{\Phi^r}(Cl_{\frakm}(K^r))$, which is isomorphic to $I_{\frakm}(K^r)/H_{\frakm}(K^r)$ by Lemma~\ref{Kernel_Reflex_Type_Norm}. To this purpose, in our implementation we obtained a set of generators for $Cl_{\frakm}(K^r)$ using MAGMA, and then implemented an algorithm for computing a subgroup from a set of elements, to get $N_{\Phi^r}(Cl_{\frakm}(K^r))$ as a subgroup of $Cl_{\frakm}(K)$.  

\section{Computing class polynomials}\label{Paragraph:Computing_Galois_Conjugates}

We turn our attention now to the computation of invariants of a hyperelliptic curve of genus $3$ with CM by $\cO_K$, and more precisely to obtaining their minimal polynomials over the reflex field. As explained in the introduction, we start by showing that given a hyperelliptic CM point with CM by $\cO_K$, all CM points obtained via the action in Equation~\eqref{Group_Action_On_ppav} are hyperelliptic. This will allow to compute the Galois conjugates of the Shioda and Rosenhain invariants, without any prior knowledge of the class fields these generate. 

We will need the following Siegel modular forms of weight $18$ and $140$ of level 1 introduced by Igusa~\cite{igusa67}:
\begin{equation}\label{eq:chi18}
\chi_{18}(Z)  = \prod_{i=1}^{36} \vartheta[\xi_i](Z),~
\Sigma_{140} (Z) = \sum_{i=1}^{36}\prod_{j\ne i} \vartheta[\xi_j](Z)^8,
\end{equation}
for all $Z\in \mathcal{H}_3$. These two modular forms allows us to fully characterize the hyperelliptic locus in the moduli space of principally polarized abelian varieties of dimension 3.

\begin{proposition}[Igusa {\cite[Lemma 11]{igusa67}}]
\label{Theorem:Igusa_single_eq_vanishing_class}
Let $Z$ be period matrix of a p.p.a.v. $A$ with polarization $E$. Then $\displaystyle (A, E)$ is isomorphic to a simple hyperelliptic Jacobian if and only if $\displaystyle \chi_{18}(Z) = 0$ and  $\displaystyle \Sigma_{140} (Z) \ne 0$.  
\end{proposition}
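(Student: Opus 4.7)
The plan is to combine the combinatorial structure of $\chi_{18}$ and $\Sigma_{140}$ with the Vanishing Criterion (Theorem~\ref{VanishingCriterion}). The strategy reduces the biconditional to counting vanishing even theta constants.

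First I would observe that since $\chi_{18}(Z) = \prod_{i=1}^{36}\vartheta[\xi_i](Z)$, we have $\chi_{18}(Z)=0$ if and only if at least one even theta constant vanishes at $Z$. Next I would analyze $\Sigma_{140}(Z)$ by cases on the number $n$ of vanishing even theta constants at $Z$. If $n=1$, say $\vartheta[\xi_{i_0}](Z)=0$, then every summand indexed by $i\ne i_0$ retains the factor $\vartheta[\xi_{i_0}]^{8}$ and vanishes, while the summand $i=i_0$ equals $\prod_{j\ne i_0}\vartheta[\xi_j](Z)^{8}\ne 0$; thus $\Sigma_{140}(Z)\ne 0$. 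If $n\ge 2$, then each summand omits only a single theta constant but still contains at least one of the vanishing ones, so $\Sigma_{140}(Z)=0$. Combined, the pair of conditions $\chi_{18}(Z)=0$ and $\Sigma_{140}(Z)\ne 0$ holds exactly when precisely one even theta constant vanishes at $Z$.

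I would then invoke Theorem~\ref{VanishingCriterion} to translate this count into the geometric statement. One direction is immediate: if $(A,E)$ is the Jacobian of a simple marked hyperelliptic curve with period matrix $Z$, then the associated azygetic system $\eta$ makes $\vartheta[\eta_{\mathcal{U}_\eta}](Z)$ the unique vanishing even theta constant. For the converse, using the free transitive action of $\text{Sp}_6(\F_2)$ on azygetic systems noted in the proof of Theorem~\ref{VanishingCriterion} (which under the $S_8$-relabeling yields exactly $36$ classes of $\eta$-maps in bijection with the $36$ even characteristics), any single vanishing even characteristic of $Z$ may be realised as $\eta_{\mathcal{U}_\eta}$ for some azygetic system, thereby producing the marked hyperelliptic structure on the curve whose Jacobian is $(A,E)$.

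The main obstacle is excluding the non-simple case, since the Vanishing Criterion as stated is phrased for simple p.p.a.v.s. Concretely, I need that a decomposable three-dimensional p.p.a.v. cannot satisfy both $\chi_{18}(Z)=0$ and $\Sigma_{140}(Z)\ne 0$. This follows from the fact that theta constants of a product polarisation factor as products of theta constants on the factors, and elementary counting over $g=1$ and $g=2$ shows that such $Z$ has many more than one vanishing even theta constant (for instance, a split $1+2$ decomposition forces the $g=1$ odd characteristic to contribute a zero factor to many of the 36 constants). Hence $n\ge 2$ in the decomposable case, forcing $\Sigma_{140}(Z)=0$ and completing the equivalence.
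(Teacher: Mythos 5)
The paper does not actually prove this proposition: it is quoted as Igusa's Lemma 11 and used as a black box. Your argument is therefore necessarily a different route --- a self-contained derivation from material already in the paper --- and it is essentially correct. The combinatorial core is right: $\chi_{18}(Z)=0$ together with $\Sigma_{140}(Z)\neq 0$ holds exactly when precisely one even theta constant vanishes, since every summand of $\Sigma_{140}$ omits exactly one factor. Translating ``exactly one even theta vanishes'' into ``hyperelliptic Jacobian'' via Theorem~\ref{VanishingCriterion}, using the transitivity of the $\mathrm{Sp}_6(\F_2)$-action to realise the vanishing characteristic as $\eta_{\mathcal{U}_\eta}$ for some azygetic system, is the same mechanism the paper itself uses elsewhere (the proof of Theorem~\ref{VanishingCriterion} and Proposition~\ref{Zprime_eta}). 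Your exclusion of the decomposable case is also sound: for a polarized product the theta constants factor, the characteristics whose components are all odd (or pairwise odd in the $1{+}1{+}1$ case) are even for the product but vanish, giving $6$ (resp.\ $9$) vanishing even constants and hence $\Sigma_{140}=0$. What your derivation buys over the paper's citation is that the reader sees exactly which counting facts the equivalence rests on; what it costs is that it inherits every imprecision of Theorem~\ref{VanishingCriterion} as stated.

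One point deserves care. Both the proposition and the Vanishing Criterion say ``simple,'' but the dichotomy your counting actually establishes is \emph{indecomposable} (as a polarized abelian variety) versus \emph{decomposable}. These are not the same: a genus~3 hyperelliptic Jacobian can be indecomposable yet non-simple (e.g.\ isogenous to a product), and for such a $Z$ one still has exactly one vanishing even theta constant, so $\chi_{18}(Z)=0$ and $\Sigma_{140}(Z)\neq 0$ while $(A,E)$ is not a \emph{simple} Jacobian. Your converse direction applies clause (1) of Theorem~\ref{VanishingCriterion}, whose hypothesis as written demands simplicity, but you have only ruled out decomposability. This is not a defect you introduced --- it is the correct reading of Igusa's and Poor's results, where ``simple'' should be understood as ``indecomposable'' --- but you should say so explicitly, since otherwise the appeal to the Vanishing Criterion in the converse direction is formally unjustified and the biconditional as literally worded would fail for indecomposable non-simple Jacobians.
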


\begin{notation}\label{notation_matrices}
  Let $\displaystyle (\C^g/\Phi(\fraka), E_{\Phi,\xi})$ be a p.p.a.v. given by a triple $(\Phi, \fraka, \xi)$. Let $\displaystyle B = (B_1| B_2)$ is a $(6\times 3)$ complex-valued matrix containing a symplectic basis for $\Phi(\fraka)$ with respect to $E_{\Phi,\xi}$, then let $Z = B_2^{-1}B_1\in \mathcal{H}_3$ be the corresponding period matrix. Let $\mathfrak{m}=(N)$, for some integer $N$. For any $[\frakc] \in I_\frakm(K^r)/H_\frakm(K^r)$ the action described in Equation~\eqref{Group_Action_On_ppav} yields a new p.p.a.v. 
$(A^{\frakc},E^{\frakc})$ given by the triple $\displaystyle\left( \Phi, N_{\Phi^r}(\frakc)^{-1}\fraka, N_{K^r/\Q}(\frakc)\xi\right)$. In a similar manner, there is a $C = (C_1|C_2)$ containing a symplectic basis for $\displaystyle\Phi(N_{\Phi^r}(\frakc)^{-1}\fraka)$ with respect to $E_{\Phi, N_{K^r/\Q}(\frakc)\xi }$ and let $Z' = C_2^{-1}C_1\in \mathcal{H}_3$. We express $C$ in terms of $B$ by taking a matrix $M$, such that $C = BM^T$. The matrix $M$ is in $GSp_{2g}(\Q)$ and is $N$-integral and invertible $\pmod{N}$ with inverse $U\in GSp_{2g}(\Z/N\Z)$. For any modular form of level $N$, we will denote by $f^U(Z')=f(\tilde{U}Z')$, for any $\tilde{U}\in Sp_{2g}(\Z)$ a lift of $U$. Let us fix such a lift. This notation will be used all thourought this section.
\end{notation}

\begin{theorem}
Let $(A,E)$ be a simple p.p.a.v. with CM by $\calO_K$ and let $[\frakc] \in Cl(K^r)$. Then $\displaystyle (A^{\frakc}, E^{\frakc})$ is isomorphic to a hyperelliptic Jacobian if and only if $\displaystyle(A,E)$ is. Moreover, if $X$ and $X^{\frakc}$ are the corresponding hyperelliptic curves, we have the following relation between their Shioda invariants:
\begin{eqnarray}\label{Conjugate_Shiodas}
(\operatorname{Shi}^{\text{\tiny abs}}(X))_i^{\sigma}=\operatorname{Shi}^{\text{\tiny abs}}(X^{\frakc})_i,~i=\overline{2,10}. 
\end{eqnarray}
\end{theorem}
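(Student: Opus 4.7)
My plan is to combine Igusa's criterion (Proposition~2.1) with the transformation behaviour of the Siegel modular forms $\chi_{18}$ and $\Sigma_{140}$ under the ideal class action of Definition~3.6, and then invoke Shimura's Main Theorem (Theorem~3.5) to identify this action with Galois conjugation on the absolute modular invariants. Since $A^{\frakc}$ is again a simple p.p.a.v.\ with CM by $\calO_K$ of the same type $\Phi$, and since the inverse action by $[\frakc]^{-1}$ sends $(A^{\frakc},E^{\frakc})$ back to $(A,E)$, it suffices to prove one direction of the hyperellipticity equivalence.

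For this direction, I would start from the period matrices $Z$ and $Z'$ attached to $(A,E)$ and $(A^{\frakc},E^{\frakc})$ and exploit the relation $C = BM^{T}$ from Notation~4.2, so that $Z'$ and $Z$ are linked by the action of $M \in GSp_{2g}(\Q)$. Since both $\chi_{18}$ and $\Sigma_{140}$ are level-$1$ modular forms expressible respectively as a product and as a symmetric sum of the $36$ even theta constants, I would lift the mod-$N$ reduction of $M$ to a symplectic matrix $\widetilde U \in \Sp_{2g}(\Z)$ and apply the theta transformation formula (Equation~2.5) factor by factor. The similitude factor arising from the rescaling $\xi \mapsto N_{K^{r}/\Q}(\frakc)\,\xi$ of the Riemann form absorbs into a global nonzero scalar, and because $\widetilde U$ permutes the even characteristics while multiplying each theta constant by a nonzero automorphy factor, the quantities $\chi_{18}(Z')$ and $\Sigma_{140}(Z')$ are nonzero scalar multiples of $\chi_{18}(Z)$ and $\Sigma_{140}(Z)$. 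Applying Proposition~2.1 to both $Z$ and $Z'$ then proves that hyperellipticity is preserved.

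For the Shioda invariant relation, the proposition at the end of Section~2.4 writes each $\operatorname{Shi}^{\text{\tiny abs}}(X)_i$ as a ratio of level-$1$ Siegel modular forms, say $\operatorname{Shi}^{\text{\tiny abs}}(X)_i = R_i(Z)$, so by the classical theory of complex multiplication its value at a CM point is algebraic over $K^{r}$. Shimura's Main Theorem together with Definition~3.6 identifies the ideal class action by $[\frakc]$ on the triple $(\Phi,\fraka,\xi)$ with the Galois action of the Artin image $\sigma \in \Gal(CM_{\frakm}(K^{r})/K^{r})$ on the corresponding moduli point, hence on any rational function of the Kummer coordinates. Applying $\sigma$ to the equality $\operatorname{Shi}^{\text{\tiny abs}}(X)_i = R_i(Z)$ therefore yields $R_i(Z)^{\sigma} = R_i(Z') = \operatorname{Shi}^{\text{\tiny abs}}(X^{\frakc})_i$, which is precisely the stated Equation~(4.3).

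The hard part I anticipate is the first step: the matrix $M$ lives in $GSp_{2g}(\Q)$ rather than in $\Sp_{2g}(\Z)$, so Equation~(2.5) does not apply directly. Careful bookkeeping will be needed to absorb the similitude factor, to control the denominators of $M$ by the level $N$, and to check that the symplectic lift $\widetilde U$ genuinely permutes the even characteristics and produces a well-defined nonzero automorphy factor for both $\chi_{18}$ and $\Sigma_{140}$. Once this bookkeeping is in place, the preservation of hyperellipticity and the Galois compatibility of the Shioda invariants both follow rapidly.
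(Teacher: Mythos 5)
Your treatment of the Shioda invariants is essentially the paper's argument: each $\operatorname{Shi}^{\text{\tiny abs}}(X)_i$ is a weight-zero quotient of level-$1$ Siegel modular forms, so Shimura's reciprocity law applies verbatim and gives $R_i(Z)^{\sigma}=R_i(Z')$. The problem is your first step, the preservation of hyperellipticity, where you propose to show that $\chi_{18}(Z')$ and $\Sigma_{140}(Z')$ are \emph{nonzero scalar multiples} of $\chi_{18}(Z)$ and $\Sigma_{140}(Z)$ by lifting the mod-$N$ reduction of $M$ to some $\tilde{U}\in\Sp_{2g}(\Z)$ and applying the theta transformation formula. This cannot work: $Z$ and $Z'$ lie in the same $GSp_{2g}(\Q)$-orbit but not in the same $\Sp_{2g}(\Z)$-orbit (they are period matrices of non-isomorphic, merely isogenous, abelian varieties), and Equation~\eqref{Def:Transormation_formula_theta_serie} only relates $\vartheta[\tilde{U}.\xi](\tilde{U}.Z')$ to $\vartheta[\xi](Z')$ --- both values attached to $A^{\frakc}$. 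No choice of lift $\tilde{U}$ turns $Z'$ into $Z$, so no amount of bookkeeping with automorphy factors produces a relation between $\chi_{18}(Z)$ and $\chi_{18}(Z')$. The difficulty you flag as "the hard part" is not bookkeeping; it is a genuine obstruction, and indeed if such a scalar relation held for arbitrary symplectic similitudes it would make the vanishing of $\chi_{18}$ an isogeny invariant, which is not what is being proved (and would trivialize the class polynomials the paper goes on to compute).

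The paper's route avoids this entirely and is the one you should take: form the weight-zero, level-$1$ modular function $\chi_{18}^{70}/\Sigma_{140}^{9}$ (the exponents are chosen so that $70\cdot 18 = 9\cdot 140$). Since $A$ and $A^{\frakc}$ are simple, Proposition~\ref{Theorem:Igusa_single_eq_vanishing_class} guarantees $\Sigma_{140}$ is nonzero at both $Z$ and $Z'$, so this function is finite at both points, and its vanishing at $Z$ is equivalent to $\chi_{18}(Z)=0$. Shimura's reciprocity law for level-$1$ modular functions then gives $\bigl(\chi_{18}^{70}/\Sigma_{140}^{9}(Z)\bigr)^{\sigma}=\chi_{18}^{70}/\Sigma_{140}^{9}(Z')$, and since a Galois conjugate of an algebraic number is zero exactly when the number is zero, $\chi_{18}(Z)=0$ if and only if $\chi_{18}(Z')=0$. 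The arithmetic input (reciprocity applied to a weight-zero function, plus $0^{\sigma}=0$) is what replaces the analytic comparison your proposal relies on.
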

\begin{proof}
Let $Z$ and $Z'$ be the period matrices corresponding to the p.p.a.v's $\displaystyle(A,E)$ and $\displaystyle (A^{\frakc}, E^{\frakc})$. Since $A$ and $A^\frakc$ are simple, the Siegel modular form $\Sigma_{140}$ in Equation \eqref{eq:chi18} has values different from $0$ at $Z$ and $Z'$ and is holomorphic (since it writes as polynomials of theta constants). We consider the quotient $\displaystyle \frac{\chi^{70}_{18}}{\Sigma^9_{140}}$, which is a modular function of level one. We denote by $\sigma \in Gal(CM_{(1)}(K^r)/K^r)$ the element corresponding to $[\mathfrak{c}]$ under the Artin isomorphism. Then by using the notation~\ref{notation_matrices}, we apply an explicit version of Shimura's reciprocity law (see for instance~\cite[Theorem 2.4]{Streng2018}): 
\begin{equation} \label{Shimurareciprocity}
\biggl( \frac{\chi^{70}_{18}}{\Sigma^9_{140}} (Z) \biggr)^\sigma = \biggl( \frac{\chi^{70}_{18}}{\Sigma^9_{140}}\biggr)^{U} (Z')= \frac{\chi^{70}_{18}}{\Sigma^9_{140}}(Z').
\end{equation}
Using Proposition \ref{Theorem:Igusa_single_eq_vanishing_class}, we see that $\chi_{18}(Z)$ vanishes if and only if $\chi_{18}(Z')$ does, and conclude that $(A,E)$ is hyperelliptic if and only if $(A^\frakc,E^\frakc)$ is. To prove Equation~\ref{Conjugate_Shiodas}, we use again Shimura's reciprocity law on the modular functions in the expression of the Shiodas.
\end{proof}

\begin{remark}
The authors believe that a generalization to higher dimensions is still valid. Indeed, to the best of our knowledge, modular forms describing the hyperelliptic locus in higher dimensions are not known. However, the Mumford-Poor Vanishing Criterion still holds, and can be used to obtain similar results. 
\end{remark}  

We now restrict to the case of the modulus $\frakm=(2)$. The following result allows us to compute the Galois conjugates of the Rosenhain invariants. 

\begin{theorem}\label{Theorem:Conjugate_Rosenhains}
  We use the notation~\ref{notation_matrices}. Moreover, we assume that $(A,E)$ is isomorphic to the Jacobian of a marked genus 3 hyperelliptic curve and we consider $\eta$ the azygetic system associated to $Z$. After normalization, let $\lambda_l$, $l=\overline{1,5}$ be the Rosenhain invariants computed with the Takase formula in Theorem~\ref{Th:Takase}. Then for any lift $\tilde{U} = \left( \begin{matrix}
    \tilde{A} & \tilde{B} \\
    \tilde{C} & \tilde{D}
    \end{matrix} \right)\in Sp_{6}(\mathbb{Z})$ of the matrix $U$ with $\delta_0 = \left( \begin{matrix}
   	(\tilde{C}^T\tilde{D})_0\\ 
	(\tilde{A}^T\tilde{B})_0
\end{matrix} \right)$, we have that
\begin{equation}\label{conjugate_Rosenhain}
\lambda_l^{\sigma}= \zeta_4(\tilde{U}, \eta)\cdot (-1)^{\langle (\eta_{l+2})_1,(\eta_1)_2+(\eta_2)_2\rangle} \cdot\lambda'_l, 
\end{equation}
where
\begin{equation*}
\begin{split}
\zeta_4 \left(\tilde{U}, \eta \right) & = \exp \biggl( \pi i \biggl( 
 \phi \bigl( \tilde{U}, \tilde{U}^T \bigl(\eta_{\calU\circ \bigl(\calV\cup\{2, l\}\bigr)} - 1/2 \delta_0\bigr) \bigr) 
+ \phi \bigl( \tilde{U}, \tilde{U}^T\bigl( \eta_{\calU\circ \bigl(\calW\cup\{2, l\}\bigr)} - 1/2 \delta_0\bigr)\bigr) - \\
& \phi \bigl( \tilde{U}, \tilde{U}^T\bigl( \eta_{\calU\circ \bigl(\calV\cup\{1,2\}\bigr)} -1/2\delta_0 \bigr)\bigr) - \phi \bigl( \tilde{U}, \tilde{U}^T\bigl( \eta_{\calU\circ \bigl(\calW\cup\{1,2\}\bigr)} - 1/2 \delta_0 \bigr) \bigr) \biggr) \biggr)^2,
\end{split}
\end{equation*}
and 
\begin{eqnarray*}
\lambda'_l = \left(\frac{\vartheta[\tilde{U}^t\left(\eta_{\calU\circ \left(\calV\cup\{2, l+2\}\right)} - \frac{1}{2} \delta_0\right)]\cdot \vartheta[\tilde{U}^t\left(\eta_{\calU\circ \left(\calW\cup\{2, l+2\}\right)} - \frac{1}{2} \delta_0\right)]}{\vartheta[\tilde{U}^t\left(\eta_{\calU\circ \left(\calV\cup\{1,2\}\right)} - \frac{1}{2} \delta_0\right)]\cdot \vartheta[\tilde{U}^t\left(\eta_{\calU\circ \left(\calW\cup\{1,2\}\right)} - \frac{1}{2} \delta_0\right)]} (Z') \right)^{2}.
\end{eqnarray*}
\end{theorem}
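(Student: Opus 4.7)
My plan is to apply Shimura's reciprocity law directly to Takase's formula (Theorem~\ref{Th:Takase}). After normalization, specializing Takase's identity with $k=1$, $m=2$ and the subset indexing shifted by two, each Rosenhain invariant takes the form
\[
\lambda_l \;=\; (-1)^{\langle (\eta_{l+2})_1,(\eta_1)_2+(\eta_2)_2\rangle}\cdot \rho_l(Z)^2,
\]
where $\rho_l(Z)$ is a ratio of four even theta constants whose characteristics are of the shape $\eta_{\mathcal U\circ(\mathcal V\cup\{\cdot,\cdot\})}$. The sign is rational, hence fixed by $\sigma$, so it only remains to compute $(\rho_l(Z)^2)^\sigma$.

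Next, I would observe that $\rho_l^2$ is a modular function of level~$2$ (the theta characteristics lie in $(1/2)\mathbb Z^{2g}/\mathbb Z^{2g}$ and are stabilized by $\Gamma(2)$). Since $\mathfrak{m}=(2)$, the explicit Shimura reciprocity law used in~\eqref{Shimurareciprocity} (and formulated as~\cite[Theorem 2.4]{Streng2018}) applies and gives
\[
\big(\rho_l(Z)^2\big)^\sigma \;=\; (\rho_l^2)^{U}(Z') \;=\; \rho_l\!\bigl(\tilde U Z'\bigr)^2
\]
for any symplectic lift $\tilde U\in\operatorname{Sp}_6(\mathbb Z)$ of $U$.

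The core computation then unfolds $\rho_l(\tilde U Z')$ via the transformation formula \eqref{Def:Transormation_formula_theta_serie}. For any $\eta\in (1/2)\mathbb Z^{2g}$, the relation $\tilde U\!.\xi=\tilde U^{*}\xi+\tfrac12\delta_0$ inverts to $\xi=\tilde U^{T}(\eta-\tfrac12\delta_0)$; feeding this into \eqref{Def:Transormation_formula_theta_serie} yields
\[
\vartheta[\eta](\tilde U Z') \;=\; \zeta(\tilde U)\,\exp\!\bigl(\phi(\tilde U,\tilde U^{T}(\eta-\tfrac12\delta_0))\bigr)\sqrt{\det(\tilde C Z'+\tilde D)}\; \vartheta\bigl[\tilde U^{T}(\eta-\tfrac12\delta_0)\bigr](Z').
\]
Substituting this expression into each of the four theta factors of $\rho_l$, the eighth roots of unity $\zeta(\tilde U)$ and the automorphy factors $\sqrt{\det(\tilde C Z'+\tilde D)}$ cancel two-against-two between numerator and denominator, leaving a single exponential carrying the four $\phi(\tilde U,\cdot)$-terms with signs matching the numerator/denominator placement. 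Squaring the ratio then produces exactly $\zeta_4(\tilde U,\eta)\cdot \lambda'_l$, and combining with the Galois-invariant sign yields the stated formula.

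The main bookkeeping obstacle is to confirm that each of the four characteristics $\tilde U^{T}(\eta_{\mathcal U\circ(\cdot)}-\tfrac12\delta_0)$ matches the characteristics in $\lambda'_l$ as stated, and that the $\phi$-contributions aggregate with the signs shown inside $\zeta_4$. A secondary point, implicit in the wording ``for any lift $\tilde U$'', is to check that the right-hand side is independent of the choice of lift: two lifts differ by an element of $2\operatorname{M}_6(\mathbb Z)\cap\operatorname{Sp}_6(\mathbb Z)$, which shifts $\delta_0$ by an integer vector and modifies the $\phi$-arguments by multiples of $2$; using the explicit quadratic form defining $\phi$ in \eqref{Def:4_th_root_of_unity_in_theta_action}, these shifts contribute only multiples of $2\pi i$ to the total exponent inside $\zeta_4$, leaving the final value unchanged.
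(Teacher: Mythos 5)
Your proposal is correct and follows essentially the same route as the paper's proof: specialize Takase's formula at $k=1$, $m=2$, apply the explicit Shimura reciprocity law of \cite[Theorem 2.4]{Streng2018} to the squared theta quotient, and then unfold each factor with the transformation formula \eqref{Def:Transormation_formula_theta_serie} via the characteristics $\tilde U^{T}(c_i-\tfrac12\delta_0)$, with the roots of unity $\zeta(\tilde U)$ and the automorphy factors cancelling two-against-two. Your additional check that the result is independent of the choice of lift $\tilde U$ is a point the paper leaves implicit, but the core argument is identical.
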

\begin{proof}
Using Theorem \ref{Th:Takase} when the two first coefficients are 0 and 1, the coefficients $\lambda_l$ with $l=  1,\ldots 5 $ can be computed as
\begin{eqnarray*}
\lambda_l = (-1)^{\langle (\eta_{l+2})_1,(\eta_1)_2+(\eta_2)_2\rangle} \left(\frac{\vartheta[\calU\circ (\calV\cup\{2, l+2\})]\cdot \vartheta[\calU\circ (\calW\cup \{2, l+2\})]}{\vartheta[\calU\circ (\calV\cup \{1, 2\})]\cdot \vartheta[\calU\circ (\calW\cup \{1, 2\})]}\right)^2 (Z).
\end{eqnarray*}
For the sake of simplicity let 
\begin{eqnarray*} 
\label{rename_symmetric_diff_elements}
c_1 = \eta_{\calU\circ \left(\calV\cup\{2, l+2\}\right)}, c_2 = \eta_{\calU\circ \left(\calW\cup\{2, l+2\}\right)}, c_3 = \eta_{\calU\circ \left(\calV\cup\{1, 2\}\right)}~\text{and}~c_4 = \eta_{\calU\circ \left(\calW\cup\{1, 2\}\right)}.
\end{eqnarray*} 
By using Shimura's reciprocity law~\cite[Theorem 2.4]{Streng2018}, we have that 
\begin{eqnarray}\label{Def:GalAction_on_Rosenhains}
\begin{split}
\lambda_l^{\sigma} &= \left( (-1)^{\langle (\eta_{l+2})_1,(\eta_1)_2+(\eta_2)_2\rangle} \left(\frac{\vartheta[c_1]\cdot \vartheta[c_2]}{\vartheta[c_3]\cdot \vartheta[c_4]}\right)^2 (Z) \right)^\sigma \\
&=(-1)^{\langle (\eta_{l+2})_1,(\eta_1)_2+(\eta_2)_2\rangle} \left( \left(\frac{\vartheta[c_1]\cdot \vartheta[c_2]}{\vartheta[c_3]\cdot \vartheta[c_4]}\right)^{2} \right)^{U} \left(Z'\right).
\end{split}
\end{eqnarray}

We denote by $ c'_i = \tilde{U}^T\left(c_i - \frac{1}{2} \delta_0\right)$. By applying the theta transformation formula, we get that
\begin{equation*}
\vartheta\left[c_i\right]^U \left(Z'\right)= \vartheta\left[\tilde{U}.c'_i\right] \left(\tilde{U}.Z'\right) = \zeta\left(\tilde{U}\right)\exp{\left(\pi i\phi(\tilde{U}, c_i')\right)} \sqrt{\det(\tilde{C}Z' + \tilde{D})}~ \vartheta\left[c'_i\right]\left(Z'\right).
\end{equation*} 
Hence Equation~\eqref{Def:GalAction_on_Rosenhains} becomes 
\begin{eqnarray*}
\lambda_l^{\sigma} & =  (-1)^{\langle (\eta_{l+2})_1,(\eta_1)_2+(\eta_2)_2\rangle}
 \exp\left(\pi i (\phi(\tilde{U}, c'_1) + \phi(\tilde{U}, c'_2) - \phi(\tilde{U}, c'_3) - \phi(\tilde{U}, c'_4))\right)^2   \left(\frac{\vartheta[c'_1]\cdot \vartheta[c'_2]}{\vartheta[c_3']\cdot \vartheta[c_4']}\right)^{2} 
\left(Z'\right)
\end{eqnarray*} 
where one can easily see that $\zeta_4(\tilde{U}, \eta)=\exp(\pi i (\phi(\tilde{U}, c'_1) + \phi(\tilde{U}, c'_2) - \phi(\tilde{U}, c'_3) - \phi(\tilde{U}, c'_4)))^2 $ is a fourth root of unity.
\end{proof}
\vspace*{0.5 cm}

\noindent
We will now give a geometric interpretation to our results. Recall that the Rosenhain coefficients are invariants for the space $\mathcal{M}_3^{hyp}[2]$. We will show that the Galois conjugates that we compute via Shimura's reciprocity law are in fact the Rosenhain invariants of another point in this moduli space. 

\begin{proposition}\label{Zprime_eta}
  Let $Z\in\calH_3$ be a period matrix of a p.p.a.v. $(A, E)$ with CM by $\calO_K$ such that $A$ is isomorphic to the Jacobian of a hyperelliptic curve, and let $\eta$ be an azygetic system associated to $Z$ after fixing a marking on the curve. Let $[\frakc]\in I_{(2)}(K^r)/H_{(2)}(K^r)$ and let $Z'$ be the period matrix for $(A^\frakc,E^\frakc)$. Assume that $\tilde{U}\in \Gamma_6(1,2)$ . Then $\eta'=\tilde{U}^T\eta$ is an azygetic system associated to the period matrix $Z'$. 
\end{proposition}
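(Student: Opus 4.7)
The plan is to check that $\eta'=\tilde U^T\eta$ satisfies the three abstract conditions of Definition~\ref{Def:Azygetic_base}, verify that it has the same parity pattern as $\eta$, and then use Shimura's reciprocity to transport the vanishing theta constant from $Z$ to $Z'$ so that the Vanishing Criterion (Theorem~\ref{VanishingCriterion}) pins $\eta'$ down as the azygetic system for $Z'$.

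First I would verify that $\eta'$ is an azygetic system in the sense of Definition~\ref{Def:Azygetic_base}. Since $\tilde U\in\Sp_6(\Z)$, the matrix $\tilde U^T$ is an invertible linear map on $V=(1/2)\Z^{6}/\Z^{6}$. Applying it componentwise to $\eta$ yields $\sum_{i=1}^{7}\eta'_i=\tilde U^T\!\sum_{i=1}^{7}\eta_i=0$, $\eta'_8=\tilde U^T\cdot 0=0$, and $\spn(\eta'_i)=V$. The Weil pairing is preserved because $\tilde U^TJ_g\tilde U^{\phantom{T}}\equiv J_g\pmod{2}$ (this is exactly $\tilde U$ symplectic), so $e_2(\eta'_i,\eta'_j)=e_2(\eta_i,\eta_j)=-1$ for $i\ne j$ with $i,j\ne 8$.

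Next I would use the hypothesis $\tilde U\in\Gamma_6(1,2)$ to match parities. The affine action~\eqref{Def: Action_on_theta_characteristic} is designed precisely so that $e_*(\tilde U.\xi)=e_*(\xi)$; when $\delta_0\equiv 0\pmod 2$ the correction term $\tfrac12\delta_0$ lies in $\Z^{6}$ and the purely linear part already preserves $e_*$. A short linear-algebra argument using $\tilde U^T=J_g\tilde U^{-1}J_g^{-1}$ (valid for any symplectic $\tilde U$) transfers this preservation from $\tilde U^*$ to $\tilde U^T$. Consequently the subset $\mathcal U_{\eta'}\subset B$ of indices with even $\eta'_i$ equals $\mathcal U_\eta$.

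The heart of the argument is to show that $\vartheta[\eta'_{\mathcal U_{\eta'}}](Z')=0$. By Theorem~\ref{VanishingCriterion} applied to $Z$, we know that $\vartheta[\eta_{\mathcal U_\eta}](Z)=0$. Put $\xi:=\eta_{\mathcal U_\eta}$ and choose any fixed even characteristic $\xi_0$ such that $\vartheta[\xi_0]$ does not vanish at $Z$ or at $Z'$; the ratio $f=\vartheta[\xi]^{8}/\vartheta[\xi_0]^{8}$ is then a modular function of level $2$. Shimura's reciprocity gives $f(Z)^\sigma=f^U(Z')=f(\tilde U.Z')$, and as in the proof of Theorem~\ref{Theorem:Conjugate_Rosenhains} the theta transformation formula~\eqref{Def:Transormation_formula_theta_serie} rewrites both $\vartheta[\xi]^{8}\circ\tilde U$ and $\vartheta[\xi_0]^{8}\circ\tilde U$ as nonzero scalar multiples of $\vartheta[\tilde U^T(\xi-\tfrac12\delta_0)]^{8}$ and $\vartheta[\tilde U^T(\xi_0-\tfrac12\delta_0)]^{8}$ respectively, evaluated at $Z'$. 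The $\Gamma_6(1,2)$ hypothesis implies $\tfrac12\delta_0\in\Z^{6}$, so these characteristics equal $\tilde U^T\xi$ and $\tilde U^T\xi_0$ modulo $\Z^{6}$. Since $f(Z)=0$ we deduce $\vartheta[\tilde U^T\xi](Z')=0$, i.e.~$\vartheta[\eta'_{\mathcal U_{\eta'}}](Z')=0$. By Theorem~\ref{Theorem:Igusa_single_eq_vanishing_class} and Theorem~\ref{VanishingCriterion} applied to $Z'$ (which is a hyperelliptic period matrix by the previous theorem), $\eta'$ is an azygetic system associated to $Z'$ and corresponds to a marking of $X^{\frakc}$.

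The main obstacle I expect is the bookkeeping in the third paragraph: aligning the \emph{affine} action $\tilde U.\xi=\tilde U^*\xi+\tfrac12\delta_0$ that appears naturally in the theta transformation law with the purely linear action $\tilde U^T$ used to define $\eta'$. The assumption $\tilde U\in\Gamma_6(1,2)$ is exactly what makes these agree modulo integers, so the difficulty is really just to check that an integer shift of the characteristic changes $\vartheta[\cdot](Z')$ by a nonzero factor, which is standard for theta constants.
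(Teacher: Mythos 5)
Your proposal is correct and follows essentially the same route as the paper: check the azygetic axioms for $\eta'=\tilde U^T\eta$, transport the vanishing even theta constant from $Z$ to $Z'$ via Shimura reciprocity combined with the theta transformation formula (using $\tilde U\in\Gamma_6(1,2)$ to kill the $\tfrac12\delta_0$ shift), and conclude with the Vanishing Criterion. The only local divergence is the Weil-pairing axiom $e_2(\eta'_i,\eta'_j)=-1$: the paper obtains it by realizing $\eta'$ as the image of the $2$-torsion under the isogeny induced by the ideal action and citing compatibility of the Weil pairing with isogenies of degree prime to $2$, whereas you derive it directly from the symplectic relation $\tilde U J_g\tilde U^T=J_g$ --- a slightly more elementary check that yields the same conclusion.
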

\begin{proof}
The action described in Definition~\eqref{Group_Action_On_ppav} yields an isogeny between $(A,E)$ and $(A^\frakc,E)$ which is given by:
\begin{eqnarray*}
I: \C^3/\Z^3+Z\Z^3\rightarrow  C^3/\Z^3+(M.Z)\Z^3,    \ x  \rightarrow  x. 
\end{eqnarray*}
Let $\eta_i$ be the azygetic system giving the map $\eta$. We consider the image of points $(\eta_i)_2+Z(\eta_i)_1 \pmod{\Z^3+Z\Z^3}$
via the isogeny $I$. We compute $\eta'_i$ such that
\begin{eqnarray*}
  (\eta_i)_2+(\eta_i)_1Z=(\eta'_i)_2+(\eta'_i)_1 (M.Z) \pmod {\Z^3+ (M.Z)\Z^3}.
\end{eqnarray*}
  By writing 
$M= \left( \begin{matrix}
    A & B \\
    C & D
    \end{matrix} \right)$
and using that $M.Z=(M.Z)^T$, the 2-torsion point corresponding to $\eta'_i$ is equivalent to:
\begin{eqnarray*}
(ZC^t+D^t)(\eta'_i)_2+(ZA^t+B^t)(\eta'_i)_1=(D^t(\eta'_i)_2+B^t(\eta'_i)_1)+Z(C^t(\eta'_i)_2+A^t(\eta'_i)_1),
\end{eqnarray*}
and thus $\eta'=\tilde{U}^T\eta_i$ is the image of 2-torsion points $(\eta_i)_2+(\eta_i)_1Z$ via the isogeny. It is easy to check 
that this is in fact an azygetic system. The first three facts in Equation~\eqref{Def:Azygetic_base} are trivial to check, the fourth equality follows by applying~\cite[Prop. 13.2(b)]{Milne_AV} for the isogeny $I$, which has degree prime to 2.

To show that $\eta'$ is associated to $Z'$, we will use the Vanishing Criterion.  We choose an even theta characteristic $c\in (1/2)\Z^6$ such that $\vartheta[c](Z)\neq 0$ and $\vartheta[c](Z')\neq 0$ and apply once more Shimura's reciprocity law~\cite{Streng2018} on the quotients of the type $\left (\frac{\vartheta[d](Z)}{\vartheta[c](Z)} \right )^2$, with $d$ even. We deduce that the unique even vanishing characteristic at $Z'$ is $\vartheta[\eta_{\mathcal{U}_{\eta'}}](Z)=0$ (since $\eta_{\mathcal{U}_{\eta'}}=\eta_{\mathcal{U}_{\eta}}$).
\end{proof}

\begin{corollary}\label{U_is_identity}
We use the notation~\ref{notation_matrices}. Then there is a point $Z'\in \Gamma(2)\backslash \mathcal{H}_g$ corresponding to the p.p.a.v. $(A^\frakc, E^\frakc)$ such that the associated marked hyperelliptic curve $X'$ has Rosenhain invariants $(\lambda'_i)_{i=1, \ldots, 5}$ given by $\lambda_i' = \lambda_i^{\sigma}$, for all $i=\overline{1,5}$. 
\end{corollary}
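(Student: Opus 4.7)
The plan is to combine Proposition~\ref{Zprime_eta} and Theorem~\ref{Theorem:Conjugate_Rosenhains} after choosing a convenient representative period matrix $Z'$ for $(A^\frakc,E^\frakc)$. Within the $\mathrm{Sp}_{6}(\Z)$-orbit of period matrices representing $(A^\frakc,E^\frakc)$, each $\Gamma(2)$-equivalence class corresponds to a different marking of the hyperelliptic curve $X'$ whose Jacobian is $A^\frakc$; the aim is to select the marking for which Takase's formula evaluates to exactly $\lambda_l^\sigma$.

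First I would choose $Z'$ so that the matrix $\tilde U$ from Notation~\ref{notation_matrices} admits a lift in $\Gamma_{6}(1,2)$. Changing the symplectic basis of $\Phi(N_{\Phi^r}(\frakc)^{-1}\fraka)$ by $C\mapsto C\tilde K^T$ with $\tilde K\in\mathrm{Sp}_{6}(\Z)$ replaces $Z'$ by $\tilde K.Z'$ and $\tilde U$ by $\tilde K^{-1}\tilde U$, and since $\mathrm{Sp}_{6}(\Z)\to\mathrm{Sp}_{6}(\F_2)$ is surjective one can force the image of $\tilde U$ modulo $2$ into any prescribed coset of $\Gamma_{6}(1,2)/\Gamma_{6}(2)$. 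Proposition~\ref{Zprime_eta} then produces an azygetic system $\eta'=\tilde U^T\eta$ associated to this $Z'$, which by Theorem~\ref{VanishingCriterion} corresponds to a well-defined marking of $X'$.

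Next I would apply Takase's formula (Theorem~\ref{Th:Takase}) to $(Z',\eta')$ to obtain the Rosenhain invariants $\tilde\lambda_l$ of the marked curve $X'$, and compare term by term with the formula of Theorem~\ref{Theorem:Conjugate_Rosenhains}. Because $\tilde U\in\Gamma_{6}(1,2)$, the vector $\delta_0$ has even entries, so $\tilde U^T(\eta_i-\tfrac12\delta_0)\equiv\eta'_i\pmod{\Z^{6}}$. Squared theta constants are invariant under integer translations of the characteristic, so the theta ratio defining $\lambda'_l$ in Theorem~\ref{Theorem:Conjugate_Rosenhains} coincides with the theta ratio produced by Takase at $(Z',\eta')$. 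Moreover, the action of $\Gamma_{6}(1,2)$ on characteristics preserves parity, giving $\mathcal U_{\eta'}=\mathcal U_\eta$, so the index sets $\calV,\calW$ labeling the two ratios also coincide.

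The main obstacle is to match the prefactors, that is, to verify
\[
\zeta_4(\tilde U,\eta)\cdot(-1)^{\langle (\eta_{l+2})_1,(\eta_1)_2+(\eta_2)_2\rangle} = (-1)^{\langle (\eta'_{l+2})_1,(\eta'_1)_2+(\eta'_2)_2\rangle}.
\]
I would expand $\zeta_4(\tilde U,\eta)$ using the explicit expression of $\phi(\tilde U,\cdot)$ recalled in Section~\ref{background}: for $\tilde U\in\Gamma_{6}(1,2)$ the linear term $2(\tilde D\xi_1-\tilde C\xi_2)^T(\tilde A^T\tilde B)_0$ contributes integer multiples of $2\pi i$, and the quadratic terms $\xi_1^T\tilde B^T\tilde D\xi_1$ and $\xi_2^T\tilde A^T\tilde C\xi_2$ simplify modulo $2$ via the symplectic relations. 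After collecting the four contributions for $c_1',\dots,c_4'$ and using the azygetic relations $\sum\eta_i=0$ and $e_2(\eta_i,\eta_j)=-1$, the telescoping sum collapses to a bilinear expression in $\eta_1,\eta_2,\eta_{l+2}$ whose exponential is precisely the sign difference above; this would yield $\tilde\lambda_l=\lambda_l^\sigma$ for every $l=\overline{1,5}$.
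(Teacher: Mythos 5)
Your overall strategy (pick a good representative period matrix for $(A^\frakc,E^\frakc)$, invoke Proposition~\ref{Zprime_eta} to get its azygetic system, then match Theorem~\ref{Theorem:Conjugate_Rosenhains} against Takase's formula) is the right one, but as written it has a genuine gap at its crux: the prefactor identity
\[
\zeta_4(\tilde U,\eta)\cdot(-1)^{\langle (\eta_{l+2})_1,(\eta_1)_2+(\eta_2)_2\rangle} \;=\; (-1)^{\langle (\eta'_{l+2})_1,(\eta'_1)_2+(\eta'_2)_2\rangle}
\]
for a general $\tilde U\in\Gamma_6(1,2)$ is asserted, not proved. The claim that ``the telescoping sum collapses to a bilinear expression in $\eta_1,\eta_2,\eta_{l+2}$'' is exactly the computation whose outcome is in question; $\zeta_4$ is a priori only a fourth root of unity, and you must show it is a sign and that it equals the discrepancy between the two Weil-pairing exponents. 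Nothing in your write-up carries this out, and it is the entire content of the matching step once the theta ratios have been identified. (One can argue it indirectly — $(Z',\eta')$ and $(\tilde U.Z',\eta)$ define the same marked curve when $\tilde U\in\Gamma_6(1,2)$, so the two Takase values agree — but that is a different argument from the expansion of $\phi(\tilde U,\cdot)$ you propose, and you would still need to supply it.)

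The irony is that your own setup already contains the fix, which is what the paper does: since $\mathrm{Sp}_6(\Z)\to\mathrm{Sp}_6(\F_2)$ is surjective, do not stop at forcing $\tilde U$ into $\Gamma_6(1,2)$ — absorb it entirely. The paper replaces the symplectic basis by $C'=BM^T\tilde U^T=B(M')^T$ with $M'=\tilde U M$, so that $\overline{M'}=U\overline{M}=I_6$ and the matrix $U'$ to which Theorem~\ref{Theorem:Conjugate_Rosenhains} is applied is the identity. Then $\delta_0=0$, the characteristics $c_i'=c_i$ are unchanged, $\phi(I,\cdot)=0$ so $\zeta_4=1$, and the formula of Theorem~\ref{Theorem:Conjugate_Rosenhains} is \emph{verbatim} Takase's formula at $(M'.Z,\eta)$; Proposition~\ref{Zprime_eta} (applied with $U'=I_6\in\Gamma_6(1,2)$) guarantees that $\eta$ is still the azygetic system associated to $M'.Z$, so the right-hand side is by definition a Rosenhain invariant of the marked curve $X'$. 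No sign bookkeeping is needed. I recommend you either adopt this choice of representative, which makes the corollary immediate, or else actually complete the $\phi$-expansion you sketch, since in its current form that step is the missing proof.
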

\begin{proof}
We define $C' = BM^T\tilde{U}^T = BM'^T$ with $M' = \tilde{U}M\in GSp_{6}(\Q)$ which is still a symplectic basis with respect to $\bigl(\Phi(N_{\Phi^r}(\frakc)^{-1} \fraka), E_{\Phi, N_{K^r/\Q}(\frakc)\xi}\bigr)$. By reducing mod 2,  we get $\overline{M'} = \overline{\tilde{U}M} = U\overline{M} = I_{6}$ with $\overline{M}\in Sp_6(\Z/2\Z)$ the reduction of $M\pmod 2$. In order to apply Theorem \ref{Theorem:Conjugate_Rosenhains}, let $U' = (\overline{M'})^{-1} = I_6$ in $Sp_6(\Z/2\Z)$. We get that 
\begin{eqnarray}\label{rosenhain}
\lambda^\sigma_i= (-1)^{\langle (\eta_i)_1,(\eta_1)_2+(\eta_0)_2\rangle} \left (\frac{\vartheta[c_1]\cdot \vartheta[c_2]}{\vartheta[c_3]\cdot \vartheta[c_4]}\right)^{2} \left(M'.Z\right),
\end{eqnarray}
for $i\in \overline{1,5}$. 
By Proposition~\ref{Zprime_eta}, since $\eta$ is the azygetic system associated to $Z$, then $U'\eta=\eta$ is the azygetic system associated to $M'.Z$. Hence the right-hand side expressions in Equation~\eqref{rosenhain} are the Rosenhain invariants $\lambda_i'$ of a genus 3 hyperelliptic curve. 
\end{proof}

From a computational point view, if we simply aim at computing the Galois conjugates of the Rosenhain invariants and deriving class field equations, one can choose between the approach in Theorem~\ref{Theorem:Conjugate_Rosenhains} or the one in Corollary~\ref{U_is_identity}. Using the formula in Theorem~\ref{Theorem:Conjugate_Rosenhains}, one can pick any period matrix for $(A^{\mathfrak{c}},E^{\frakc})$, whereas if we use the Corollary~\ref{U_is_identity}, we need to carefully construct the period matrix $Z'$ first.

\subsection*{The Rosenhain class polynomials.}

We define the Rosenhain and Shioda class polynomials as follows:
\[ H_{K^r,l}(t) = \prod_\sigma (t - \lambda_l^{\sigma}),~~~S_{K^r,l}(t)=\prod_\sigma (t - (\operatorname{Shi}^{\text{\tiny abs}}(X))_l^{\sigma}(X)),
\]
\noindent
for $1\leq l \leq 5$ and $\sigma, \sigma' \in \Gal(CM_{\frakm}(K^r)/K^r)$ with $\frakm=(2)$ for the product in $H_{K^r,l}$ and $\frakm=(1)$ for the product and sum in $S_{K^r,l}$. For $2\leq l \leq 5$, in order to use the Hecke representation as in Gaudry~\textit{et al}~\cite{Gaudry}, we define:
\begin{eqnarray*}
\hat{H}_{K^r,l}(t) = \sum_\sigma  \lambda_l^\sigma\prod_{\sigma'\neq\sigma} (t- \lambda_1^{\sigma'}),~~~ \hat{S}_{K^r,l}(t) = \sum_\sigma  (\operatorname{Shi}^{\text{\tiny abs}}(X))_l^\sigma\prod_{\sigma'\neq\sigma} (t- (\operatorname{Shi}^{\text{\tiny abs}}(X))_1^{\sigma'}).
\end{eqnarray*}
Algorithm~\ref{Compute_Shimura_Reciprocity} in Appendix~\ref{algorithms} gives all the steps of our computation of a list of approximations for the Galois conjugates of the Rosenhain invariants, that we use to get the polynomials $H_{K^r,l}$ and $\hat{H}_{K^r,l}$. The algorithm for the Shioda class polynomials is similar.

\section{Benchmarks and results}\label{implementation}

We implemented the algorithms described here using SAGE~\cite{sagemath} and Magma~\cite{magma} by building on an existing implementation \cite{githubBILV}. The computation of primitive CM types for genus 3 in~\cite{githubBILV} is dependent on the group structure of $\Gal(L/\Q)$. Our CM type computation is independent of this group isomorphism, and works for all genus. In this general setting, we also implemented the construction of the reflex field of $K$ and the reflex CM type using Algorithm~\ref{Compute_Reflex}. Since SAGE does not implement ray class groups, we used an interface to Magma to compute the group $Cl_\frakm(K^r)$ and enumerate elements in $N_{\Phi^r}(Cl_\frakm(K^r))$.

\subsection{Practical experiments}
For space reasons, we reproduce here partially an example and give the full computation in Appendix B. Let $K$ be the CM field defined by the polynomial $x^{6}+43x^{4}+451x^{2}+729$. Since the field contains $i$, all p.p.a.v. with CM by $K$ are hyperelliptic. For one of its primitive CM types, we computed the reflex as the field of equation $x^6 + 1012x^4 + 262048x^2 + 3968064$. The subgroup $N_{\Phi^r}(Cl_\frakm(K^r))$, for $\frakm=(1),(2)$, has three elements, which means that each point will have two Galois conjugates and that the polynomials $H_{K^r,i}$ have degree 3. Tables~\ref{resultsRosenhain} and~\ref{resultsShioda} give minimal polynomials for the coefficients of Rosenhain and Shioda class polynomials, respectively. Table ~\ref{resultsShioda} gives only one of the Shioda class polynomials, the full example is contained in the supplementary material.

\begin{table}[h!]
  \caption{Coefficients of Rosenhain class polynomials for the field of equation $x^6 + 1012x^4 + 262048x^2 + 3968064$.}
  \label{resultsRosenhain}
  \resizebox{1\textwidth}{!}{%
 \begin{tabular}{|c|c|c|c|c|}
\hline
pol. & $t^3$ & $t^2$ & $t$ & $1$\\
\hline
\hline
$H_{K^r,1}$ & $x-1$ & $x^3 + 9x^2 - 48x - 421$ & $x^3 - 96x^2 + 2737x - 22357$ & $x^3 + 43x^2 + 355x + 121$\\
\hline
$\hat{H}_{K^r,2}$ &-  & $9x^3 - 238x^2 + 1361x - 2195$ & $9x^3 - 812x^2 - 45328x - 487744$ & $9x^3 - 7549x^2 + 448286x - 5820221$ \\
\hline
$\hat{H}_{K^r,3}$ & -& $x^3 - 9x^2 - 48x - 25$ & $x^3 - 156x^2 + 3532x - 6424$ & $x^3 - 63x^2 - 3641x - 11825$\\
\hline
$\hat{H}_{K^r,4}$ &- & $9x^3 - 238x^2 + 1361x - 2195$ & $9x^3 - 812x^2 - 45328x - 487744$ & $9x^3 - 7549x^2 + 448286x - 5820221$\\
\hline
$\hat{H}_{K^r,5}$ & - & $x-6$ & $x^3 + 36x^2 - 768x - 26944$ & $x^3 - 192x^2 + 10948x - 178856$\\
\hline
\end{tabular}}
\end{table}

\begin{table}[h!]
  \caption{Coefficients of Shioda class polynomials for the field of equation $x^6 + 1012x^4 + 262048x^2 + 3968064$.}
   \label{resultsShioda}
 \resizebox{1\textwidth}{!}{%
 \begin{tabular}{|c|c|}
\hline
coeff. & minimal pol. \\
\hline
\hline
$t^3$ & $x-1$ \\
\hline
\multirow{4}{*}{$t^2$} & $609125894427130745695834466763740170563639135833980928x^3$\\
 & $+ 767725829025607378425247292652111581405730035262610432x^2$\\
 & $+ 300061222092067234082658423678294482282672624903293536x$ \\
 & $+ 37243744151263324949875407438939777569860345513286901$\\
\hline
\multirow{4}{*}{$t$} & $63402882286988579232480270348050635745503565534880222391610376192x^3$\\
& $- 13725192373693066840488231757093791171761630118575681645149421568x^2$\\
& $+ 786342921318635510916127890581383360136229955111267984417588224x$\\
& $- 13516646075537145153192703242525175243162619024655881644192369$\\
\hline
\multirow{4}{*}{1} & $178186461969600322341142200214605756480742490360904642532008424549206957490176x^3$\\
 & $+ 2500238465575574956316922540016128195983221550816430781122824734688503922688x^2$\\
 & $+ 7942841558044400713140974757114936533108129843365204389947225517213646848x$ \\
 & $+ 6573048087002947388939081561118123324940201519692560411907632812406461$\\
\hline
\end{tabular}}
\end{table}

The coefficients of these polynomials are defined over the real multiplication field of $K^r$, which explains why the degrees of polynomials in the Tables is 3. For most computations on the Rosenhains 500 bits of precision were enough, whereas for the Shiodas we used 5000 bits of precision. Indeed, the modular forms appearing in the expressions of the Shiodas have much larger weight, which results into  much more precision needed when computing with the Shiodas. To compute the Shiodas,  we first computed the Rosenhain coefficients and got an approximation of the equation of the curve, and afterwards computed the Shiodas from this equation. 

All computations were performed on a single core of a Intel Core i7-4790 CPU 3.60GHz and took approximatively 5 minutes at 500 bits of precision and less than 2 hours for 5000 bits. Most time is spent on the theta constants computation, which is performed using the naive implementation in~\cite{githubBILV}. To recognize the values of the coefficients of the polynomials $H_{K^r,i}$ and $S_{K^r,i}$ as algebraic integers, we use the algebraic dependence testing algorithm~\cite{Cohen}, implemented in PARI/GP by the function \texttt{algdep}.
As expected, the polynomials for the Shiodas have larger coefficients, which is due again to the shape of the modular forms in their expression.

\section{Conclusion}
We have introduced class polynomials for invariants of genus 3 hyperelliptic curves and implemented an algorithm for their computation. In upcoming work with XXX\footnote{Name removed for anonymous submission}, we will present examples of sextic CM fields allowing both hyperelliptic and non-hyperelliptic curves with CM by $K$. For such examples, our algorithm for computing a hyperelliptic orbit via Shimura's reciprocity law allows the computation of one equation for the associated class field, whereas the use of invariants of plane quartics on the non-hyperelliptic orbits would provide equivalent equations.

\bibliographystyle{plain}
\bibliography{biblio_ants}
\vspace*{-0.3 cm}
\section{Appendix A}\label{algorithms}

\begin{algorithm}[h!]
\caption{Computing the reflex $CM$-pair}
\label{Compute_Reflex}
	\begin{algorithmic}[1]
		\REQUIRE The CM-pair $(K,\Phi)$ and the embeddings $\iota_{K}:K\rightarrow L$, $\pi:L\rightarrow \C$. 
		\ENSURE The reflex CM-pair $(K^r, \Phi^r)$, and the embedding $\iota_{K^r}:K^r\rightarrow L$. 
		\STATE \label{CM_Type_to_Gal_Elem} Compute the inverse CM type $\Phi_L^{-1}$ described in Equation~$\eqref{Lifted_Reflex_Type}$.
		\STATE Compute $H^r < Gal(L/\Q)$ as in Equation~\eqref{Reflex_Field_} and define $K^r:= L^{H^r}$.
		\STATE Set the CM type $\Phi^r$ s.t. $\Phi^{r}_L = \Phi_L^{-1}$. 
		\STATE Compute embedding $\iota:K^r\rightarrow L$ as in Diagram \eqref{Embeddings_Kr_into_L}.
		\STATE \textbf{return} the reflex CM-pair $(K^r, \Phi^r)$ and the embedding $\iota_{K^r}$.
	\end{algorithmic}
\end{algorithm}

\begin{algorithm}[h!] 
\caption{Computing the reflex typenorm}
\label{Type_Norm_Computation}
	\begin{algorithmic}[1]
	  \REQUIRE $\displaystyle (K^r, \Phi^r)$ a primitive CM-pair, the embedding $\displaystyle \iota_{K^r}: K^r \hookrightarrow L$ and a fractional ideal $\displaystyle \mathfrak{a}$ in $K^r$.
          	\ENSURE The image $\displaystyle \mathfrak{b} = N_{\Phi^r}(\mathfrak{a})$ of the reflex typenorm of $\fraka$.
		\STATE Let $\mathfrak{a}' = \iota(\mathfrak{a})\calO_L$ be the lift of $\mathfrak{a}$ to $L$ via $\iota_{K^r}$. 
	\STATE Define $\displaystyle \mathfrak{B}=  N_{\Phi^r_L}(\mathfrak{a}')$.			
        \STATE Define a dictionary $\mathcal{D}$ whose keys are of the form $\frakp$, with $\frakp$ a prime ideal in $\calO_K$, and whose values are lists of couples $(\mathfrak{P},g)$ with $\mathfrak{P}$ above $\frak{p}$ and $g\in \Z$. Set $\mathcal{D}=\{\}$. 
	\STATE \label{prime_decomposition_of_Ideal_L} Compute the prime ideal decomposition   
		$\displaystyle \mathfrak{B} = \prod_{ \mathfrak{P}} \mathfrak{P}^{e}$ and create a set $\displaystyle \mathfrak{M}=\{(\mathfrak{P},e)\}$. 
\FOR{$(\mathfrak{P},e)\in \mathfrak{M}$} 
\STATE \label{Algorithm:Ideal_below} Compute with \cite[Algorithm 2.5.3]{Cohen} the prime ideal $\displaystyle \mathfrak{p}\displaystyle$ in $\mathcal{O}_K$ lying below $\displaystyle \mathfrak{P}$.
\STATE Compute the ramification index $f$ of $\mathfrak{p}$ in $\calO_L$ and $g=e/f$.
\STATE Add $(\mathfrak{P},g)$ to the list for the key $\frakp$ in the dictionary $\mathfrak{D}$.
\ENDFOR 
\STATE The image of the reflex typenorm of $\mathfrak{a}$ is given by
\begin{math}\displaystyle \label{Computing_Ideal_Down}
\frakb = N_{\Phi^r}(\mathfrak{a}) = 
\prod_{(\mathfrak{p}:(\frakP, g))\in \mathfrak{D}} \mathfrak{p}^{g}.
\end{math}
\STATE \textbf{return} $\frakb$.
	\end{algorithmic}	
\end{algorithm}

Let $\frakH = N_{\Phi^r}(Cl_{(2)}(K^r)) \subseteq Cl_{(2)}(K)$. Every $\frakc \in \frakH$ corresponds to an representative in the quotient $I_{\mathfrak{m}}(K^r)/H_{\mathfrak{m}}(K^r)$ via Lemma~\ref{Kernel_Reflex_Type_Norm}. For the sake of simplicity, we denote by $\frakc$ both the element in $I_{\mathfrak{m}}(K^r)/H_{\mathfrak{m}}(K^r)$ and the one in $\frakH$.
\begin{algorithm}[h!]
\caption{Computing the Galois action using Shimura's reciprocity law}
\label{Compute_Shimura_Reciprocity}
	\begin{algorithmic}[1]
	\REQUIRE A $CM$ pair $(K,\Phi)$, where $K$ is a sextic CM field and $\Phi$ is a CM type, and precision $prec$. 
	\ENSURE The Rosenhain class polynomials, if a hyperelliptic curve with CM by $(K,\Phi)$ exists. 
	\STATE Let $\calR_i$, $1\leq l \leq 5$ be an empty list.   
	\STATE Compute the Galois closure $L$ of $K/\Q$.
	\STATE Call Algorithm~\ref{Compute_Reflex} to get the reflex $CM$-pair $(K^r,\Phi^r)$ and the fixed embedding $\iota_{K^r}:K^r\rightarrow L$.
	\STATE Determine the ray class group $Cl_{\frakm}(K^r)$ for the modulus $\frakm = (2)$.
	\STATE \label{Image_TypeNorm} Compute the image of $Cl_\frakm(K^r)$ under the reflex typenorm, and store the result in a list $\frakH$ of elements in $N_{\Phi^r}(Cl_{\frakm}(K^r))$. 
	\STATE \label{choosing_ppav} Choose a p.p.a.v. $\calA$ of dimension $g$ with CM by $\calO_K$ given by the triple $(\Phi,\fraka, \xi)$ and construct period matrix $Z$ with \cite[Algorithm 2]{BILV}. 
	\IF{exactly one of the theta constants $\vartheta[c](Z)$, with $c$ even, is zero} \label{compute_Theta_values}
        \STATE \label{Takase_formula}Compute the Rosenhain invariants $\lambda_i$ with precision $prec$ using Takase's formula~\eqref{Th:Takase}        
	\FORALL{$\frakc\in\frakH$}
	\STATE\label{computeation_conjugate_ppav} Compute p.p.a.v. $\calA^\frakc(\Phi, \fraka, \xi)$ and the corresponding $Z'$.
	
	\STATE Compute $\lambda_l^{\sigma}$  using the formula in Theorem~\ref{Theorem:Conjugate_Rosenhains} and add it to the list $\calR_l$.
	\ENDFOR
	\ENDIF 
        \STATE \textbf{return} $\calR_l, 1\leq l \leq 5$.
        
        \end{algorithmic}
\end{algorithm}

\newpage
\section{Appendix B}
We give here the rest of the computation for the example presented in Section 5 in the paper. The tables below contain the remaining minimal polynomials for the coefficients of Shioda class polynomials for the field of equation $x^6 + 1012x^4 + 262048x^2 + 3968064$. Since the CM field $K$ contains $i$, the odd index Shioda invariants for curves with CM by this field are 0, so we only give the class polynomials for the even index ones.

\begin{table}[h!]
 \resizebox{1\textwidth}{!}{%
 \begin{tabular}{|c|l|}
 \Xhline{2\arrayrulewidth}
 \multirow{2}{*}{coeff.} & \multirow{2}{*}{\hspace*{8.5cm} $S_4$}\\
  & \\
 \Xhline{2\arrayrulewidth}
 $t^3$ & $x-1$ \\
\hline
 \multirow{4}{*}{$t^2$} & $609125894427130745695834466763740170563639135833980928x^3$\\
  & $+ 767725829025607378425247292652111581405730035262610432x^2$\\
  & $+ 300061222092067234082658423678294482282672624903293536x$ \\
 & $+ 37243744151263324949875407438939777569860345513286901$\\
\hline
\multirow{4}{*}{$t$} & $63402882286988579232480270348050635745503565534880222391610376192x^3$\\
 & $- 13725192373693066840488231757093791171761630118575681645149421568x^2$\\
 & $+ 786342921318635510916127890581383360136229955111267984417588224x$\\
& $- 13516646075537145153192703242525175243162619024655881644192369$\\
\hline
\multirow{4}{*}{1} & $178186461969600322341142200214605756480742490360904642532008424549206957490176x^3$\\
& $+ 2500238465575574956316922540016128195983221550816430781122824734688503922688x^2$\\
& $+ 7942841558044400713140974757114936533108129843365204389947225517213646848x$ \\
& $+ 6573048087002947388939081561118123324940201519692560411907632812406461$\\

\Xhline{2\arrayrulewidth}
\multirow{2}{*}{coeff.} & \multirow{2}{*}{\hspace*{8.5cm} $S_6$}\\
  & \\
 \Xhline{2\arrayrulewidth}
  $t^3$ & $x-1$ \\
 \hline
 \multirow{4}{*}{$t^2$} & $36305243238982413890281145728640119544631251519339927191004989892595083547500347392x^3$\\
 & $-4562099625444542640326757150351501343961608027306073167762565630692397127965147136x^2$\\
 & $-38373945786414575731513083189904988026800112808692449136937354443072398826315008x$\\
 & $+ 4414650961793693104140503198846514725940832560953694614278134162355358444958089$\\
 \hline
 \multirow{4}{*}{$t$}& $17918396199353355049058805088857384488383405667618139921548515288945465097396579931049284021849686016x^3$\\
 & $- 127779926553579818761843900212549793508900714146887992364081265379646615684431729282349219465658368x^2$ \\
 & $+ 173870801973972034188665861925383203311853045841476017423892114350599398335858780136987512078336x$\\
 & $-34420059650825478884547414485898422831449454499107719097111434114544516620821487531714046653$\\
 \hline
 \multirow{8}{*}{1} & $2387771057358159367290182151060570674330169223812328868107571288339171666253440282540613475209$\\
 & $61296035059355070744231936x^3$\\
 &$- 252803368000795481224137548975625269232188032444010506173669474186687453303016936277559608353$\\
 & $37040318608580594892800x^2$\\
 & $+ 5398401954573885778797871623679405939013291830944663536859088092482449919401839107837170565278$\\
 & $01421676551340032x$\\
 & $- 3173319027571342197457569660707432734035037164408107365485384034562528494733387662989363676838$\\
 & $10259545343$\\
 \Xhline{2\arrayrulewidth}
\multirow{2}{*}{coeff.} & \multirow{2}{*}{\hspace*{8.5cm} $S_8$}\\
  & \\
\Xhline{2\arrayrulewidth}
 $t^3$ & $x-1$ \\
\hline
\multirow{8}{*}{$t^2$}& $17896610229573747174863113237715437362555551124007560823368715534964490124049106086376218475171$\\
& $803998476828672000x^3$\\
& $- 8632764461526815390006506282494230116166385098236548909260694165812944738921272704072567622170572$\\
& $8388910284800x^2$\\
& $- 16025644434941891793190025881111099657866879313532342984394763561573741656582305902336968805129475$ \\
& $94035269120x$\\
& $+ 32529447226138938988669905527920116558625244669244753609865721673571430611095621276909587434858843$ \\
& $21574283$\\
\hline
\multirow{8}{*}{$t$} & $1282932435619698492314773507177627929574723682784816214540704700755412687484862774112813753070$\\
&$4707515861536346952627346410280321024000000x^3$\\
& $- 8299716559598714066625123496581422312446576620002557854941277709128003194818010807059424517$\\
&$07026350404833519933010891504909025280000x^2$\\
& $+ 11917877361233953970010282512530202208569610409193159992562994892937507196265843185621505728$\\
&$014209932930350406577902992615014400x$\\
& $- 7123291218140354671417349541795074255778101849191965430930690889917518777576393756582739603434$\\
& $903409080046466500094184843$\\
\hline
\multirow{8}{*}{1} & $919680103243938973033922868060563504875468133020309148624843871674616915700907555399703648849339503$\\
& $9639144509259910868729803635874949685516535079108608000000000x^3$\\
& $- 73559035342600500295040776192553962202449203791679388033633296324431166276281136614284845993667$\\
& $7866564005446966040649993887769623302774737002823680000000x^2$\\
& $+ 124859453246417802749874443519286834888515955015727839481478027559413452197186695139754181396863$\\
& $30771129612045565781011078163726078946693873664000x$\\
& $- 93156216301296264623691984297496230976086049395125259735730164341146872241062442916382140292172050$\\
& $4373610916963617612989963293561961647$\\
\Xhline{2\arrayrulewidth}
\multirow{2}{*}{coeff.} & \multirow{2}{*}{\hspace*{8.5cm} $S_{10}$}\\
  & \\
\Xhline{2\arrayrulewidth}
$t^3$ & $x-1$\\
\hline
\multirow{8}{*}{$t^2$} & $355559112646711805441533173187873498621252899864733702350096028638947$\\
& $086434181869745058736889518310522032532646375388879517143627083022336000x^3$\\
& $- 1746673407594299251126586578459915177303446574135079461748793747543688357497$\\
& $51251268047890484907368544601828099865254929376287650532556800x^2$\\
& $- 221926037560681297097463583452946777526823575009037624736284810874206769091545575$\\
& $5758535100520982872331063889546506421674505437288222720x$\\
& $- 12712015828149450830947991623508633378439516962838487229521551449205444676181757419$\\
& $6317114842504230031245936394117370332526149643871$\\
\hline
\multirow{8}{*}{$t$} & $120857237869098796749243849386021434183238111564262812773854488641003266260560109723706082$\\
& $3605269494853912985274275370629735011102984094101536010019521818373425987584000000x^3$\\
& $- 44031291431601012366252978980559805207664034353140198557448777193058697175779212464$\\
& $31509460802917257291378017545643516518179640662145686947311793204195606596157440000x^2$\\
& $+ 39633030527996127828036163516427799270794225874823970274200867418047856664450955053469$\\
& $30905281877052732103313929610944694425132139827112711532749595268140236800x$\\
& $+ 7573459660397995154627991279690982518993463138446961229699989343988200478541388188029$\\
& $8693671007527321130879566194815441715447240349610824132088378321443$\\
\hline
\multirow{8}{*}{1} & $123240874097869995256375628744147413507353413780686466185600538844710585952172273671281859545131359$ \\
&$62104749046597816477320750767810020106355459964636651413138949876841656437080639980977612914688000000000x^3$\\
& $+ 631734108594739263421048849111225980235838610319305095154878283895084698019513801345125252253266$\\
& $8191388231529613779472209762512685540915211230224111283342447639256324862020608604973826048000000x^2$\\
& $+ 40831140498524759788167285099786677735212333529675141992372211716893625351825268656408230063653920$\\
& $8808658845598223352036086838810875696551081698050828145247131187273402074829684736000x$\\
& $+ 289760817269051161710247539111124531055540151755308643194080087839211242064815461147954$\\
& $938189924314930566902032346905632969933471189766079230515290335739274989177970677409$\\
\hline
\end{tabular}}
\end{table}

\end{document}